\newtheorem{theorem}{Theorem}[section]
\newtheorem{proposition}[theorem]{Proposition}
\newtheorem{lemma}[theorem]{Lemma}
\newtheorem{corollary}[theorem]{Corollary}
\theoremstyle{remark}
\newtheorem{remark}[theorem]{Remark}
\theoremstyle{definition}
\newtheorem{definition}[theorem]{Definition}
\newtheorem{example}[theorem]{Example}
\newcommand\leftmap[1]{\smash{\mathop{\leftarrow}\limits^{#1}}}
\newcommand\cC{{\mathcal C}}
\newcommand\cP{{\mathcal P}}
\newcommand\cF{{\mathcal F}}
\newcommand\KK{{\mathbb K}}
\newcommand\CC{{\mathbb C}\,}
\newcommand\ZZ{{\mathbb Z}}
\newcommand\NN{{\mathbb N}}
\newcommand\PP{{\mathbb P}}
\newcommand\gl{{\lambda}}
\newcommand\gk{{\kappa}}
\DeclareMathOperator{\Sing}{\operatorname{Sing}}
\newcommand{\C}{\mathcal C}
\newcommand{\SP}{\text{\rm SP}}
\DeclareMathOperator{\corank}{\operatorname{corank\ }}
\DeclareMathOperator{\codim}{\operatorname{codim\ }}
\DeclareMathOperator{\rank}{\operatorname{rank\ }}
\title[The Max Noether Fundamental Theorem is Combinatorial]{The Max Noether Fundamental Theorem is Combinatorial}
\author[J.I.Cogolludo Agust{\'\i}n]{J.I. Cogolludo Agust{\'\i}n}
\address{Departamento de Matem\'aticas, IUMA\\
Universidad de Zaragoza\\
C/ Pedro Cerbuna, 12\\
E-50009 Zaragoza, Spain
}
\email{jicogo@unizar.es}
\author[M.\'A.Marco Buzun\'ariz]{M.\'A.Marco Buzun\'ariz}
\address{ICMAT: CSIC-Complutense-Autonoma-Carlos III\\
Departamento de \'Algebra - Facultad de CC. Matem\'aticas -
Plaza de las Ciencias, 3\\
28040 Madrid, Spain
}
\email{mmarco@unizar.es}
\keywords{Plane algebraic curves, singularities}
\thanks{}
\subjclass[2000]{58K65, 14Q05, 14B05, 14C22, 14H50, 14F25, 32K07}
\begin{document}

\begin{abstract}
In the present paper we give a reformulation of the Noether Fundamental Theorem 
for the special case where the three curves involved have the same degree.
In this reformulation, the local Noether's Conditions are weakened.
To do so we introduce the concept of Abstract Curve Combinatorics (ACC) which 
will be, in the context of plane curves, the analogue of matroids for hyperplane arrangements.
\end{abstract}

\maketitle

\section{Introduction}
In 1873 Noether stated his celebrated Fundamental Theorem~\cite{Noether-fundamental-theorem}, 
sometimes referred to as the ``$AF+BG$'' Theorem. This theorem brings together the geometric and algebraic 
conditions plane projective algebraic curves should satisfy when belonging to a pencil. The following 
statement can be found in~\cite{Fulton-algebraic-curves}.

\begin{theorem}[Max Noether's Fundamental Theorem]
Let $F,G,H$ be homogeneous reduced polynomials in three variables defining projective algebraic curves
$V(F)$, $V(G)$, and $V(H)$. Assume $V(F)$ and $V(G)$ have no common components. 
Then there is an equation $H=AF+BG$ (with $A,B$ forms of degrees $\deg(H)-\deg(F)$ and $\deg(H)-\deg(G)$ respectively)
if and only if $H_P\in (F_P,G_P)\subset \mathcal O_P(\PP^2)$ for any $P\in V(F)\cap V(G)$.
\end{theorem}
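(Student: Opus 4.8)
The ``only if'' direction is immediate: taking germs of the identity $H=AF+BG$ at any point $P$ gives $H_P=A_PF_P+B_PG_P\in(F_P,G_P)$. So the plan is to prove the converse, and the approach I would take is to recast the statement as a membership in a \emph{saturated} ideal and then exploit that $V(F)\cap V(G)$ is a projective complete intersection.

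Write $S=k[X,Y,Z]$, put $m=\deg F$, $n=\deg G$, $d=\deg H$, and let $\cI=(F,G)\subseteq S$; since the homogeneous components of a representation $H=AF+BG$ may be extracted separately, the conclusion ``$H=AF+BG$ with $A,B$ forms of degrees $d-m,d-n$'' is precisely the assertion $H\in\cI_d=FS_{d-m}+GS_{d-n}$. The first and only substantial algebraic step is the following: because $V(F)$ and $V(G)$ share no component (and we may assume $F,G$ nonconstant, the remaining cases being trivial), $F,G$ is a regular sequence in $S$ — $G$ is a nonzerodivisor modulo $F$ since $\gcd(F,G)=1$ in the UFD $S$ — so $S/\cI$ is Cohen--Macaulay of Krull dimension $1$. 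Hence $\operatorname{depth}_{\fm}(S/\cI)=1>0$ for $\fm=(X,Y,Z)$, so $H^0_{\fm}(S/\cI)=0$; equivalently $\cI$ is its own saturation, and for every $e$ the canonical map $(S/\cI)_e\to\Gamma(\PP^2,\widetilde{S/\cI}(e))$ is injective.

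Next I would translate the local hypothesis into a sheaf-theoretic vanishing. Let $\cZ\subseteq\PP^2$ be the closed subscheme $\operatorname{Proj}(S/\cI)=V(F)\cap V(G)$, which is finite and supported on the finitely many points of $V(F)\cap V(G)$, with $\mathcal O_{\cZ}=\widetilde{S/\cI}$ and ideal sheaf whose stalk at $P$ is $(F_P,G_P)\subseteq\mathcal O_P(\PP^2)$ if $P\in V(F)\cap V(G)$ and the unit ideal otherwise. The form $H$ is a global section of $\mathcal O_{\PP^2}(d)$, and its image under the restriction $\mathcal O_{\PP^2}(d)\to\mathcal O_{\cZ}(d)$ is determined germ-wise: at $P\notin V(F)\cap V(G)$ it vanishes because $(F_P,G_P)=\mathcal O_P(\PP^2)$, and at $P\in V(F)\cap V(G)$, after trivializing $\mathcal O(d)$ near $P$, it is exactly the class of $H_P$ modulo $(F_P,G_P)$. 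Thus the hypothesis ``$H_P\in(F_P,G_P)$ for all $P\in V(F)\cap V(G)$'' is equivalent to the statement that $H$ restricts to $0$ in $\Gamma(\PP^2,\mathcal O_{\cZ}(d))=\Gamma(\PP^2,\widetilde{S/\cI}(d))$.

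Finally I would combine the two: the hypothesis places the class of $H$ in the kernel of $(S/\cI)_d\to\Gamma(\PP^2,\widetilde{S/\cI}(d))$, which is injective by the depth computation, so $H\in\cI_d$, i.e.\ $H=AF+BG$ with $A,B$ forms of the asserted degrees. I expect the genuine obstacle to be the Cohen--Macaulay / saturation step: making sure that ``no common component'' is exactly what forces $\cI$ to be saturated, so that a section of $\widetilde{S/\cI}$ arising from a degree-$d$ form is already visible inside $(S/\cI)_d$; the remaining points are bookkeeping. A more elementary variant replaces this homological input by an explicit resultant computation — choose coordinates with $[0{:}0{:}1]\notin V(F)\cup V(G)$ and with the intersection points projecting injectively from $[0{:}0{:}1]$, use the classical identity $\operatorname{Res}_Z(F,G)=\cA F+\cB G$ with $\cA,\cB$ forms, reduce $H$ modulo $F$ and $G$ by Euclidean division in $Z$, and track degrees — and there the delicate part is precisely the degree bookkeeping needed to land in $\cI_d$ rather than merely in $\cI$.
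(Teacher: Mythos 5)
Your proof is correct, but be aware that the paper never proves this statement: it is the classical Max Noether theorem, quoted from Fulton's \emph{Algebraic curves} purely as background, with the proofs delegated to the cited literature (Noether, Scott, Fulton); the paper's own result is the combinatorial analogue (Theorem~\ref{thm-noether}), proved by completely different means (combinatorial pencils, admissible families, intersection matrices and the Vinberg classification). So the only comparison available is with the classical proofs the paper points to. Your saturation argument is sound as written: since $V(F)$ and $V(G)$ share no component, $F,G$ is a regular sequence, so $S/(F,G)$ is Cohen--Macaulay of dimension $1$, hence $H^0_{\fm}(S/(F,G))=0$, the ideal is saturated, and $(S/\cI)_d\to\Gamma(\PP^2,\widetilde{S/\cI}(d))$ is injective; the Noether conditions say exactly that $H$ maps to zero in the target (the sheaf is supported at the finitely many intersection points and its stalk at $P$ is $\mathcal O_P/(F_P,G_P)$ after trivializing $\mathcal O(d)$), so $H\in\cI_d$ and extracting homogeneous components gives $A,B$ of the asserted degrees. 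This is a sheaf-theoretic repackaging of the elementary proof in Fulton's book: there one chooses coordinates so that no intersection point lies on $Z=0$, uses the finiteness isomorphism $k[x,y]/(f,g)\cong\prod_P\mathcal O_P/(f_P,g_P)$ to obtain $Z^rH\in(F,G)$, and then cancels the powers of $Z$ because $Z$ is a nonzerodivisor on $S/(F,G)$ --- which is exactly your saturation statement in elementary form; your resultant variant is closer still to Noether's original treatment. If you write this up in full, the only steps deserving explicit justification are the identification of the kernel of $(S/\cI)_e\to\Gamma(\PP^2,\widetilde{S/\cI}(e))$ with the degree-$e$ part of $H^0_{\fm}(S/\cI)$, and the stalkwise description of the restriction of $H$ to the complete-intersection subscheme; both are standard and your outline handles them correctly.
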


Here we denote by $F_P$ the germ of $F$ at $P$, by $(F_P,G_P)$ the local ideal generated by the germs
$F_P$ and $G_P$, and by $V(F)\subset \PP^2$ the set of zeroes of $F$. The local conditions on the 
equations $F,G,H$ are called \emph{Noether conditions}.

This theorem was originally attacked both from geometric and algebraic points of view 
(\cite{Scott-proof,Fulton-algebraic-curves}) and it has been
recently generalized to the non-reduced case by Fulton~\cite{Fulton-adjoints}.

Most of the efforts to understand and rewrite Noether's Fundamental Theorem have been focused on finding 
conditions that are equivalent to the Noether conditions in particular instances like transversality of 
branches, ordinary singularities, etc.

Our purpose here is to concentrate on the case where $\deg F=\deg G=\deg H$ and to weaken the Noether's 
conditions so as to have strictly weaker local conditions that can still provide the equivalence of the 
result. Note that the Noether Fundamental Theorem is a combination of a global condition (the existence of
the curves $F,G,H$) and local conditions. Our weakened local conditions combined with the global condition
result in this equivalence.

The weakened local conditions can be briefly described as follows:
We say $F$ satisfies the \emph{combinatorial conditions} with respect to $G$ and $H$ if for any point 
$P\in V(F)\cap V(GH)$ and any local branch $\delta$ of $F$ at $P$ then $\mu_P(\delta,G)=\mu_P(\delta,H)$, 
where $\mu_P$ denotes the multiplicity of intersection of branches at $P$.
Also we say that $F,G,H$ satisfy the conditions for a \emph{combinatorial pencil} if each equation
satisfies the combinatorial conditions with respect to the other two equations. We also introduce the
concept of a \emph{primitive combinatorial pencil} which corresponds with the geometric idea that the fibers 
of the map over $\PP^1$ induced by the pencil after resolution of indeterminacy are connected. 
In \S\ref{sec-stein} we prove that any combinatorial pencil can be refined to a primitive combinatorial pencil.

The global condition can be rewritten as follows:
If $\deg F=\deg G=\deg H$, then the condition $H=AF+BG$ simply means that $H$ belongs to the pencil
generated by $F$ and $G$, or simply that $F,G,H$ belong to a pencil. 

The main result is the following.

\begin{theorem}
Let $F,G,H$ be projective plane curves of the same degree. Assume $F$ and $G$ have no common components. 
If $F,G,H$ belong to a primitive combinatorial pencil then they belong to a pencil.
\end{theorem}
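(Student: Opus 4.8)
The plan is to show that $H$ lies in the pencil $\cP:=\langle F,G\rangle$ (any pencil containing $F$ and $G$ is $\cP$, so this is what the conclusion means). The strategy has three parts: first extract from the combinatorial hypotheses a clean statement about how the three curves meet; then prove the theorem in the case where one of $V(F),V(G),V(H)$ is irreducible, by pulling back to a normalization; and finally use primitivity to reduce the general case to that one.

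First I would record the consequences of the combinatorial conditions at the level of the curves. They force the three sets $V(F)\cap V(G)$, $V(F)\cap V(H)$ and $V(G)\cap V(H)$ to coincide --- each equal to the base locus $B$ of $\cP$ --- the local intersection multiplicities $\mu_P(F,G)$, $\mu_P(F,H)$, $\mu_P(G,H)$ to agree at every $P\in B$ (their common value $\mu_P$ summing to $d^2$, $d=\deg F$), and, sharpening this, the divisor cut on the normalization of any one of the three curves by a second one to equal the divisor cut by the third. Each of these is a short deduction from the definition: a point of $V(F)\cap V(H)$ off $V(G)$ would force $\mu_P(\delta,G)=0$, hence $\mu_P(\delta,H)=0$, for every branch $\delta$ of $F$ at $P$, a contradiction; conversely a point of $B$ off $V(H)$ would make some branch $\delta$ of $F$ satisfy $\mu_P(\delta,G)>0=\mu_P(\delta,H)$; summing the branch equalities over all branches of $F$ at $P$ gives $\mu_P(F,G)=\mu_P(F,H)$; and the branch equalities are exactly the equality of divisors on the relevant normalization.

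The core of the argument is the case where one of $V(F),V(G),V(H)$ is irreducible, say $V(H)$ (the other two being symmetric). Let $\nu\colon\widetilde{V(H)}\to V(H)\subset\PP^2$ be the normalization. For $\widetilde Q\in\widetilde{V(H)}$ over $Q$, lying on a branch $\eta$ of $V(H)$, one has $\operatorname{ord}_{\widetilde Q}(\nu^*F)=\mu_Q(\eta,F)$ and $\operatorname{ord}_{\widetilde Q}(\nu^*G)=\mu_Q(\eta,G)$, and by the combinatorial condition on the branches of $H$ --- together with $V(H)\cap V(FG)=B$, so that both orders vanish when $Q\notin B$ --- these agree for every $\widetilde Q$. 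Hence $\nu^*F$ and $\nu^*G$ are sections of $\nu^*\cO_{\PP^2}(d)$ with the same divisor on the connected projective curve $\widetilde{V(H)}$, so they are proportional: $\nu^*(F-cG)=0$ for some $c\in\CC$. Thus $F-cG$ vanishes on $V(H)$, so $F-cG\in I(V(H))=(H)$ (using that $H$ is reduced); since $F$ and $G$ have no common component, $F-cG\neq0$ has degree $d=\deg H$, and therefore $H$ is a scalar multiple of $F-cG$, i.e. $H\in\cP$. Note that applied componentwise this also shows, in general, that $F/G$ is constant (equal to some $q_j\in\PP^1$) on each irreducible component $C_j$ of $V(H)$.

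It remains to reduce the general case to the irreducible one, and this is where primitivity enters. Let $\pi\colon X\to\PP^2$ resolve $V(FGH)$ and the base points of $\cP$, with induced morphism $\Phi\colon X\to\PP^1$. Since $X$ is rational, the Stein factorization of $\Phi$ has base $\PP^1$, so connectedness of the fibres of $\Phi$ --- primitivity --- is equivalent to the generic member $M_t:=V(F-tG)$ being irreducible (by Bertini $M_t$ is smooth off $B$, so its proper transform is simultaneously its normalization and the generic fibre of $\Phi$). The plan is then to replace $F$ by a generic $M_t$ and apply the irreducible case, which needs $(M_t,G,H)$ to be again a combinatorial pencil for generic $t$. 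Two of the three conditions are easy: on a branch of $V(G)$ one has $\operatorname{ord}(F-tG)=\operatorname{ord}(F)$ (since $G$ vanishes there), giving the condition on branches of $G$; and on a branch $\eta$ of $V(H)$ one has $\operatorname{ord}(F-tG)=\operatorname{ord}(F)=\operatorname{ord}(G)$ as soon as $t$ differs from the value $q_j$ of $F/G$ on the component through $\eta$ --- and there are only finitely many such values. The third condition, on the branches of $M_t$, is the delicate one: transported through $\pi$ it asserts $\nu_L(G)=\nu_L(H)$ for the divisorial valuations attached to the \emph{leaves} $L$ of the base cluster of $\cP$, and this must be squeezed out of the combinatorial data (one has the analogous equalities at the leaves of the branches of $F$, $G$ and $H$, and the equality of the sums $\sum_L\nu_L(G)=\mu_P=\sum_L\nu_L(H)$ over the cluster leaves above each $P$; promoting these to the individual equalities is the real work, and in principle one could instead run an induction on $\deg H$, peeling off one component of $V(H)$ at a time). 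I expect this local comparison of valuations at the base cluster of the pencil to be the main obstacle; the non-primitive examples --- two pairs of lines through a common point, together with a third such pair, which form a combinatorial pencil with $H\notin\cP$ --- show that connectedness of the fibres of $\Phi$, equivalently irreducibility of the generic member, is indispensable here. Once the comparison is in place, the irreducible-member case yields $H\in\langle M_t,G\rangle=\cP$, finishing the proof.
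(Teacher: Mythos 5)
Your first two steps are sound: the consequences you extract from the combinatorial conditions (equality of the three pairwise intersection sets and of the branchwise intersection numbers) do follow from Definition~\ref{def-comb-pencil}, and the irreducible-member case is correct --- two sections of $\nu^*\cO_{\PP^2}(d)$ with the same divisor on the normalization of an irreducible member are proportional, which forces $H$ to be a multiple of $F-cG$. The reduction of primitivity to irreducibility of the generic member $M_t$ via Stein factorization and simple connectivity of a rational surface is also essentially right (and is the same geometric input the paper uses). But the proof is not complete, and you say so yourself: the whole weight of the argument falls on showing that $(M_t,G,H)$ is again a combinatorial pencil, i.e.\ that for every branch $\delta$ of a generic member at a base point one has $\mu_P(\delta,G)=\mu_P(\delta,H)$, equivalently that the valuations of $G$ and of $H$ agree along every dicritical divisor of the resolved pencil (and, relatedly, that $M_t$ meets $V(H)$ only inside the base locus). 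This is not a routine verification: the combinatorial hypotheses give branchwise equalities only along branches of $F$, $G$, $H$ themselves, and the needed equalities $\nu_E(G)=\nu_E(H)$ at the leaves of the base cluster are a statement about infinitely near multiplicities of $H$, which is essentially equivalent to $H$ behaving like a member of the pencil --- the very thing to be proved. Flagging it as ``the real work'' is accurate, and as it stands this is a genuine gap.

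It is instructive that the paper's proof is organized precisely to avoid this comparison of valuations. Instead of modifying $F$, it works component by component on $H$: for a branch $\gamma$ of a component $h_1$ at a base point there is a special member $\alpha_\gamma F+\beta_\gamma G$ meeting $\gamma$ with excess intersection, and a one-line B\'ezout count (using $\mu_Q(h_1,\alpha F+\beta G)\geq\mu_Q(h_1,F)$ everywhere, with strict inequality at $P$) forces $h_1$ to divide that member. Primitivity is then used twice: connectivity of the strict transform of $V(H)$ through non-trivial divisors makes all components of $H$ choose the \emph{same} member, giving $\alpha F+\beta G=H'K$ with $H'$ supported on $V(H)$; and the intersection-matrix argument (the blocks $Q_\lambda$ are of affine type, so $\ker Q_H$ is one-dimensional and the mixed block $M$ vanishes) together with Stein factorization and $\gcd(\bar m)=1$ eliminates the spurious factor $K$ and the exponents, yielding $H=H'$. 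If you want to salvage your route, the paper's B\'ezout trick is the missing tool: rather than proving $\nu_E(G)=\nu_E(H)$ abstractly, you can use it to show directly that each component of $H$ lies in a member of $\langle F,G\rangle$, after which your normalization argument becomes unnecessary; conversely, without some such device your plan of ``squeezing the valuation equalities out of the combinatorial data'' (or the alternative induction on $\deg H$ you mention) would still have to be carried out in detail before the proof can be accepted.
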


To end this introduction we present two examples aimed to clarify the sharpness of these combinatorial 
conditions. The first one points out that the combinatorial conditions are indeed weaker than the Noether 
conditions and the second one suggests that the conditions cannot be weakened.

\begin{example}
This first example shows that the (local) Noether Conditions are stronger that the combinatorial condition
described above. Consider the germs $f=x^3$, $g=y^2$ and $h=y^2+(x+y)^3$ in $\mathcal O_{P}(\PP^2)=\CC\{x,y\}$, 
$P=[0:0:1]$. It is obvious that they satisfy the combinatorial conditions at $P$ since 
$\mu_P(f,g)=\mu_P(f,h)=\mu_P(g,h)=6$. However, $h\notin (f,g)$ since $h=y^2+x^3+y^3+3xy^2+3x^2y$, where 
 $h_1=y^2+x^3+y^3+3xy^2\in (x^3,y^2)$, but $h_2=x^2y\notin (x^3,y^2)$, and $h=h_1+h_2$. 
\end{example}

\begin{example}
This second example shows that the combinatorial conditions have to be stated for each branch, as 
opposed to each irreducible component. Consider $F=ZY^2-ZX^2+X^3$, $G=(X+Y)^3$, and $H=(X-Y)^3$, 
three cubics. Note that $V(F)\cap V(G)=V(F)\cap V(H)=V(G)\cap V(H)=\{P=[0:0:1]\}$ and
$\mu_P(F,G)=\mu_P(F,H)=\mu_P(G,H)=9$. However, $F,G,H$ are not in a pencil. Note that the combinaotial 
conditions are \emph{not} satisfied, since $F$ is not locally irreducible at $P$ and the two branches 
$\delta_1$ and $\delta_2$ satisfy $\mu_P(\delta_1,G)=\mu_P(\delta_2,H)=6$, and $\mu_P(\delta_1,H)=\mu_P(\delta_2,G)=3$.
\end{example}

\section{Settings}
\subsection{Abstract Curve Combinatorics}
\begin{definition}
\label{def-acc}
An \emph{Abstract Curve Combinatorics} (ACC for short) is a sextuplet 
$W:=(\mathbf r,S,\Delta,\partial,\phi,\mu),$
where
\begin{enumerate}
\item \label{def-acc-cond1}
$\mathbf r$, $S$, and $\Delta$ are finite sets,
\item \label{def-acc-cond2}
$\partial: \Delta\to S$ and $\phi:\Delta\to \mathbf r$ are surjective maps,
\item \label{def-acc-cond3}
$\mu:\SP^2(\Delta)\to \NN$, where $\SP^2(\Delta)$ is the symmetric product of $\Delta$, such
that $\mu(\delta_1,\delta_2)> 0$ if and only if $\partial(\delta_1)= \partial(\delta_2)$ 
and $\phi(\delta_1)\neq \phi(\delta_2)$.
\end{enumerate}
\end{definition}

For simplicity, we denote $\Delta_P:=\partial^{-1}(P)$, $P\in S$.

We say that two ACC's are equivalent if there are bijections preserving the corresponding maps.

\begin{remark}
Note that any projective curve $\cC\subset \PP^2$ determines naturally an ACC 
$W_\cC:=(\mathbf r,S,\Delta,\partial,\phi,\mu),$ (which will be referred to as the 
\emph{Weak Combinatorial Type of $\cC$}) as follows:
\begin{enumerate}[$(i)$]
\item 
The set $\mathbf r$ is the set of irreducible components of $\mathcal C$, 
\item
The set $S:=\Sing(\mathcal C)$, is the set of singular points of $\mathcal C$, 
\item
$\Delta:=\cup_{P\in S} \{\Delta_P\}$ where $\Delta_P$ is the set of local branches of 
$\mathcal C$ at $P\in S$, $\partial(\delta):=P$ if $\delta\in \Delta_P$, 
and $\phi$ assigns to each local branch the global irreducible component that contains it.
\item
$\mu(\delta_1,\delta_2)$ is defined as the multiplicity of intersection between $\delta_1$ and $\delta_2$
(when $\partial(\delta_1)= \partial(\delta_2)$ and $\phi(\delta_1)\neq \phi(\delta_2)$) and as zero otherwise.
\end{enumerate}
\end{remark}

In accordance with this motivation, given an ACC $W=(\mathbf r,S,\Delta,\partial,\phi,\mu)$, we will refer 
to the elements of $\mathbf r$ (resp. $S$, and $\Delta$) as \emph{irreducible components}, (points, and branches).
Also $\mu$ will be referred to as the \emph{intersection multiplicity} of two branches.

\subsection{B\'ezout Condition and degrees}
Consider $W$ an ACC and define 
$d_{i,j}:=\sum_{{\tiny\begin{matrix} \phi(\delta_1)=i,\\ 
\phi(\delta_2)=j\end{matrix}}} \mu(\delta_1,\delta_2)$, for any $i,j\in \mathbf r, i\neq j$.

\begin{definition}
\label{eq-wc}
$W$ satisfies the \emph{B\'ezout Condition} if 
$\frac{d_{i,j}d_{i,k}}{d_{j,k}}$ is independent of $j,k\in \mathbf r$. 
In that case, one can define
$$
d_i:=+\sqrt{\frac{d_{i,j}d_{i,k}}{d_{j,k}}}.
$$
and will be referred to as the \emph{degree} of $i$.
\end{definition}

Note that the Weak Combinatorial Type of a plane projective curve satisfies the B\'ezout Condition 
and $d_i$ coincides with the algebraic degree of the irreducible component $i$.

\subsection{Combinatorial Pencils}
Let $W$ be an ACC satisfying the B\'ezout Condition.
\begin{definition}
\label{def-comb-pencil}
We say that $W$ contains a combinatorial pencil if there exist
$\bar m:=(m_i)_{i\in \mathbf r}$ a list of integers and
$\cF=\{F_1,\dots,F_k\}$, $k\geq 3$ a partition of $\mathbf r$ such that:
\begin{enumerate}
\item 
$\sum_{i\in F_j} m_id_i$ is independent of $j\in\{1,\dots,k\}$, 
(such constant will be denoted by $d_\cF$) and
\item 
for any $P\in S$ one of the following two conditions is satisfied:
\begin{enumerate} 
\item
\label{def-cond1}
either $\phi(\Delta_P)\subset F_i$ for a certain $i=1,\dots,k$,
\item 
\label{def-cond2}
or $\phi(\Delta_P)\not \subset F_i$, in which case for each $\delta\in \Delta_P$, the 
natural number
$$\sum_{{\tiny \begin{matrix} \phi(\delta')\in F_j \end{matrix}}} 
m_{\phi(\delta')} \mu(\delta,\delta')$$
is independent of $j$ (as long as $\phi(\delta)\notin F_j$).
Such a constant will be denoted by $k_{\delta}$.
\end{enumerate}
\end{enumerate}
The points $P\in S$ satisfying~(\ref{def-cond2}) will be called the 
\emph{base points of the combinatorial pencil} and each $F_i\in \cF$ will be called a \emph{fiber}.
The integer $m_i$ will be called the \emph{multiplicity} of the $i$-th component and the members 
of the partition $\cF$ are the \emph{members} of the pencil.

We also say that three curves $F$, $G$ and $H$ \emph{belong to a combinatorial pencil} if 
$(\{F,G,H\},\bar m)$ is a combinatorial pencil, where $\bar m$ is the list of multiplicities of 
the components of $\C=F\cup G \cup H$.
\end{definition}

Our purpose will be to investigate under what circumstances three curves belonging to a combinatorial pencil, 
also belong to a pencil, that is $H=A F+B G$ for some $A,B\in \CC^*$. 
Note that this is not true in general as one can simply see with line arrangements.
Consider $F=XY$, $G=X^2-Y^2$, and $H=X^2-4Y^2$. It is obvious that $F$, $G$, and $H$ belong to a combinatorial 
pencil, but not to a pencil. The geometrical reason behind this phenomenon is that the resolution of the rational 
map $[X:Y:Z]\mapsto [F:G]$ does not have connected fibers.

One needs an extra condition that assures that the pencil is primitive. The definition of a primitive combinatorial
pencil and the fact that any combinatorial pencil can be reduced to a primitive one will be the main idea 
of the coming section.

\section{Combinatorial version of the Stein Factorization Theorem}
\label{sec-stein}

In our context of pencils in $\PP^2$, the Stein Factorization Theorem (\cite[Corollary III.11.5]{Hartshorne-algebraic}) 
and the fact that a rational surface is simply connected imply that any morphism $f$ from a rational surface $S$ onto 
$\PP^1$ factorizes through a morphism $g:S\to \PP^1$ with connected fibers and a covering $c$ of $\PP^1$, that is, 
$f=c \circ g$. In other words, any pencil whose resolution does not result into connected fibers can be refined
(after a base change) into a pencil with connected fibers (also known as \emph{primitive pencil}).

From a purely combinatorial point of view one can ask themselves if any combinatorial pencil admits a refinement
into a primitive combinatorial pencil. 

Similar results for line arrangements already exist (see~\cite{Marco-admissible,Falk-Yuzvinsky-multinets}).

\subsection{Weak combinatorics of resolutions}
In this section we will construct a combinatorial analogue of a bolwing-up process, which will lead to the 
concept of solvable ACC. Such combinatorics have the appropriate structure for our purpose.

\begin{definition}
Let $W=(\mathbf r, S, \Delta, \partial, \phi, \mu)$ be an ACC. Let us fix a point $P\in S$ and a list 
$\bar \nu:=(\nu_\delta)_{\delta\in \Delta_P}$ of positive integers. We say that the ACC
$\widehat W=(\hat{\mathbf r},\hat S,\hat \Delta,\hat \partial, \hat \phi,\hat \mu)$ is obtained as 
a $\sigma$-process at $P$ from $W$ (denoted by $W \leftarrow \hat W$) if there exists a partition 
$\{\hat P_1,...,\hat P_\ell\}$ of $\Delta_P$ such that the following properties are held:
\begin{enumerate}
\label{eq-mu-bup}
\item $\hat{\mathbf r}=\mathbf r \cup \{E\}$ (intuitively, $\hat{\mathbf r}$ results from adding the 
\emph{exceptional divisor} $E$ to $\mathbf r$),
\item $\hat S=(S\setminus \{P\}) \cup \{\hat P_1,...,\hat P_\ell\}$, (the point $P$ is replaced by the
\emph{infinitely near points} $\{\hat P_1,...,\hat P_\ell\}$),
\item $\hat \Delta=\Delta \cup \{\hat \delta_1,...,\hat \delta_\ell\}$, where $\hat \phi (\hat \delta_i)=E$,
$\hat \partial (\hat \delta_i)=\hat P_i$, $\hat \phi|_\Delta=\phi$, 
$\hat \partial|_{(\Delta\setminus \Delta_P)}=\partial|_{(\Delta\setminus \Delta_P)}$, and
$\hat \partial(\delta)=\hat P_i$ if $\delta\in \hat P_i$ (the exceptional divisor contributes with one local branch 
at each infinitely near point and the maps are naturally extended),
\item $\hat \mu(\delta,\hat \delta_i)=\nu_i$ if $\delta\in \hat P_i$ (this is the intersection multiplicity of
each branch with the exceptional divisor),
\item $\hat \mu(\delta_1,\delta_2)=\mu(\delta_1,\delta_2) - \nu_{\delta_1}\nu_{\delta_2}$, if 
$\delta_1,\delta_2\in \Delta_P$ and $\phi(\delta_1)\neq \phi(\delta_2)$, (the intersection multiplicity of two local 
branches after blow up decreases by the product of the multiplicities of the branches),
\item $\hat \mu$ extends $\mu$ outside $\SP^2(\Delta_P)$.
\end{enumerate}
\end{definition}

\begin{remark}
Let $W$ be the weak combinatorial type of a curve $\cC$ in a rational surface $V$ and let $V\leftarrow \hat V$ be 
a blow-up of $V$ at a singular point $P$ of $\cC$. Note that then the weak combinatorial type of the total 
transform $\hat \cC$ is obtained by a $\sigma$-process at $P$ from $W$ by using as $\bar \nu$ the list of 
multiplicities of the local branches at $P$. 
\end{remark}

This way one can extend the concept of resolution to general ACC's.

\begin{definition}
\label{def-res}
A sequence of $\sigma$-processes $W=W_0 \leftarrow W_1 \leftarrow ... \leftarrow W_n$ of ACC's
$W_k:=(\mathbf r^{(k)},S^{(k)},\partial^{(k)},\Delta^{(k)},\phi^{(k)},\mu^{(k)})$, 
$k=0,1,...,n$ is called a \emph{resolution} of $W$ if:
\begin{enumerate}
\item 
\label{def-res-prop-0}
$\nu_{\delta}\leq 1$ if $\delta\notin \Delta^{(0)}$, where $\nu_{\delta}$ is the multiplicity associated 
with $\delta$ at a $\sigma$-process, and
\item\label{def-res-prop-2} 
(Normal-crossing condition)
$\# \Delta^{(n)}_P = 2$ for any $P\in S^{(n)}$ and $\mu^{(n)}$ only takes values in $\{0,1\}$.
\end{enumerate}
An ACC is called \emph{solvable} if there exists a resolution.
\end{definition}

\begin{remark}
Note that the ACC obtained from a curve in $\PP^2$ admits a (combinatorial) resolution given by any (geometric)
resolution of its singularities, that is, every weak combinatorial type is solvable. Such a resolution will be
called a \emph{geometric} resolution of $W$. Note that weak combinatorial types might admit non-geometric
resolutions aswell.
\end{remark}

\subsection{Admissibility conditions}
Let $W$ be an ACC and let $\{v_i\}_{i\in \mathbf{r}}$ be a list of vectors in $\KK^{k}$.
For any $\delta\in \Delta$ define 
\begin{equation}
\label{eq-vdelta}
v_{\delta}:=\sum_{{\tiny{\begin{matrix}j\in \mathbf{r} \\ j\neq \phi(\delta)\end{matrix}}}}\mu(\delta,j)\ v_{j},
\end{equation}
where $\mu(\delta,j):=\sum_{\delta'\in \phi^{-1}(j)} \mu(\delta,\delta')$.
Note that, by Definition~\ref{def-acc}.(\ref{def-acc-cond3}), the only branches that contribute to $v_{\delta}$
are those in $\Delta_P$.

We say that $\{v_i\}_{i\in \mathbf{r}}$ satisfies the \emph{admissibility conditions} for $W$ if:
\begin{equation}
\label{eq-adm-cond}
\{v_{\phi(\delta)},v_{\delta}\}, \quad \text{ are linearly dependent for all } \delta\in \Delta.
\end{equation}
We will often denote this by saying $v_{\phi(\delta)}||v_{\delta}$ (note that one of the 
vectors might be zero).

\begin{definition}
A list of vectors $v_W:=\{v_i\}_{i\in \mathbf{r}}$ in $\KK^{k}$ satisfying the 
admissibility conditions~(\ref{eq-adm-cond}) for $W$ and spanning $\KK^{k}$ is called a 
\emph{$k$-admissible family} for $W$.
\end{definition}

One has the following result:

\begin{proposition}
\label{prop-comb-kadm}
If $(\cF,\bar m)$ is a combinatorial pencil of $(k+1)$-fibers of $W$, then there exists a
$k$-admissible family for $W$.
\end{proposition}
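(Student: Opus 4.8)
The plan is to manufacture the admissible family directly from the fiber partition, letting every irreducible component inherit a vector attached to the fiber in which it lies, rescaled by its multiplicity. First I would fix $k+1$ vectors $u_1,\dots,u_{k+1}\in\KK^{k}$ subject to $\sum_{j=1}^{k+1}u_j=0$ and such that every $k$-element subset of $\{u_1,\dots,u_{k+1}\}$ is a basis of $\KK^{k}$; concretely one may take $u_j:=e_j$ for $1\le j\le k$ and $u_{k+1}:=-(e_1+\dots+e_k)$, where $e_1,\dots,e_k$ is the standard basis. Writing $f(i)$ for the index with $i\in F_{f(i)}$, I would then set $v_i:=m_i\,u_{f(i)}$ for $i\in\mathbf r$, and claim that $v_W:=\{v_i\}_{i\in\mathbf r}$ is a $k$-admissible family for $W$.

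The heart of the argument is the verification of the admissibility conditions~(\ref{eq-adm-cond}). Fix $\delta\in\Delta$ and set $P:=\partial(\delta)$, $i_0:=\phi(\delta)$, $j_0:=f(i_0)$. Since $\mu(\delta,j)=\sum_{\delta'\in\Delta_P,\,\phi(\delta')=j}\mu(\delta,\delta')$ by Definition~\ref{def-acc}.(\ref{def-acc-cond3}), plugging $v_j=m_j\,u_{f(j)}$ into~(\ref{eq-vdelta}) and grouping by fiber yields
$$v_\delta=\sum_{l=1}^{k+1}\Bigl(\sum_{\substack{\delta'\in\Delta_P\\ \phi(\delta')\in F_l}}m_{\phi(\delta')}\,\mu(\delta,\delta')\Bigr)u_l .$$
If $P$ is not a base point, then $\phi(\Delta_P)\subset F_{l_0}$ for some $l_0$, and since $i_0\in\phi(\Delta_P)$ and the $F_l$ partition $\mathbf r$ we must have $l_0=j_0$; hence only the $l=j_0$ summand can be nonzero and $v_\delta\in\KK\,u_{j_0}$. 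If $P$ is a base point, condition~(\ref{def-cond2}) states that the coefficient of $u_l$ in the displayed formula equals a constant $k_\delta$ for every $l\ne j_0$; calling $c$ the $j_0$-th coefficient and using $\sum_{l\ne j_0}u_l=-u_{j_0}$ we get
$$v_\delta=k_\delta\sum_{l\ne j_0}u_l+c\,u_{j_0}=(c-k_\delta)\,u_{j_0}.$$
In either case $v_\delta$ and $v_{i_0}=m_{i_0}u_{j_0}$ lie on the line $\KK\,u_{j_0}$, so $\{v_{\phi(\delta)},v_\delta\}$ is linearly dependent (with $v_{i_0}=0$ allowed, as foreseen in the text). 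This is the step where the combinatorial-pencil hypothesis is used: the non-base points impose nothing on the $u_l$, and the base-point identity of Definition~\ref{def-comb-pencil} is converted, through the single relation $\sum_j u_j=0$, into exactly the parallelism demanded by~(\ref{eq-adm-cond}).

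It remains to check that $v_W$ spans $\KK^{k}$. Its span is the span of those $u_l$ for which some $i\in F_l$ has $m_i\neq0$, and since every $k$ of the $u_l$ form a basis, this is all of $\KK^{k}$ as soon as at most one fiber consists entirely of components of multiplicity zero. I expect this last, \emph{degenerate}, possibility to be the only delicate point: it cannot occur when the multiplicities $m_i$ are positive — which is the case of interest, since the $\bar m$ of Definition~\ref{def-comb-pencil} are read off from actual curves — so one proceeds under that nondegeneracy, and a fully general statement would require ruling the configuration out by an additional hypothesis. Combining the construction, the admissibility check, and the spanning statement then yields the desired $k$-admissible family and proves the proposition.
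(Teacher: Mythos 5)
Your construction is exactly the paper's: the paper assigns $v_i:=m_ie_j$ for $i\in F_j$, $j\neq 0$, and $v_i:=-m_i(e_1+\dots+e_k)$ for $i\in F_0$, and verifies admissibility trivially at non-base points and via condition~(\ref{def-cond2}) at base points, just as you do (your grouping-by-fiber computation is in fact a more careful version of the paper's terse ``$k_\delta v_i+v_\delta=0$''). So the proposal is correct and follows essentially the same route; your extra remark on the spanning condition addresses a point the paper's proof silently omits and is harmless under the intended positivity of the multiplicities.
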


\begin{proof}
Let us consider $\cF=\{F_0,F_1,...,F_k\}$ and define the following family of vectors $v_i$, $i\in \mathbf r$:
$$v_i:=
\begin{cases}
m_i e_j & \text{ if } i\in F_j, j\neq 0 \\
-m_i (e_1+...+e_k) & \text{ if } i\in F_0. \\
\end{cases}
$$
Under these conditions note that if $P\in S$ is not a base point, then condition~(\ref{eq-adm-cond}) is
immediately satisfied since all the vectors involved are linearly dependent. Now, if $P\in S$ is a base point, 
then the condition~(\ref{def-cond2}) in Definition~\ref{def-comb-pencil} above implies that 
$k_{\delta} v_i+v_\delta=0$ and hence condition~(\ref{eq-adm-cond}) is also true.
\end{proof}

\begin{definition}
\label{def-k-adm-pencil}
The $k$-admissible family for $W$ associated with the combinatorial pencil $(\cF,\bar m)$ as in 
Proposition~\ref{prop-comb-kadm} 
will be referred to as \emph{the admissible family of $W$ associated with $(\cF,\bar m)$}.
\end{definition}

We need the following result from linear algebra.

\begin{lemma}
\label{lemma-adm-comd}
Suppose $v_1,\dots,v_r\in \KK^{k}$ are vectors such that
$$
\array{l}
v_1 || \sum_{j=1}^r a_{1,j} v_j, \\
v_2 || \sum_{j=1}^r a_{2,j} v_j, \\
\dots \\
v_{r-1} || \sum_{j=1}^r a_{r-1,j} v_j, \\
\endarray
$$
where $a_{i,j}=a_{j,i}$. Then 
$$
v_{r} || \sum_{j=1}^r a_{r,j} v_j.
$$
\end{lemma}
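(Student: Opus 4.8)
The plan is to encode the relation ``$u\,||\,w$'' as the vanishing of the bivector $u\wedge w$ in $\Lambda^{2}\KK^{k}$ and then simply to add the $r-1$ hypotheses together; the whole point is that all the ``mixed'' terms cancel because $A=(a_{i,j})$ is symmetric, leaving exactly the $r$-th relation.

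First I would recall that for $u,w\in\KK^{k}$ one has $u\,||\,w$ if and only if $u\wedge w=0$ in $\Lambda^{2}\KK^{k}$ (this formulation also covers the degenerate cases in which $u$ or $w$ vanishes). Writing $w_{i}:=\sum_{j=1}^{r}a_{i,j}v_{j}$ and $\omega_{i,j}:=v_{i}\wedge v_{j}$, so that $\omega_{j,i}=-\omega_{i,j}$ and $\omega_{i,i}=0$, the hypothesis $v_{i}\,||\,w_{i}$ becomes
\[
0=v_{i}\wedge w_{i}=\sum_{\substack{1\le j\le r\\ j\neq i}}a_{i,j}\,\omega_{i,j}\qquad(i=1,\dots,r-1),
\]
the term $j=i$ dropping out since $\omega_{i,i}=0$; observe in particular that the diagonal entries $a_{i,i}$ are irrelevant.

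Next I would add up these $r-1$ identities and split each inner sum according to whether $j\le r-1$ or $j=r$, obtaining $0=S_{1}+S_{2}$ with $S_{1}=\sum_{i=1}^{r-1}\sum_{\substack{1\le j\le r-1\\ j\neq i}}a_{i,j}\,\omega_{i,j}$ and $S_{2}=\sum_{i=1}^{r-1}a_{i,r}\,\omega_{i,r}$. Grouping the ordered pairs in $S_{1}$ into unordered ones, the contribution of $\{i,j\}$ is $a_{i,j}\omega_{i,j}+a_{j,i}\omega_{j,i}=0$ because $a_{j,i}=a_{i,j}$ and $\omega_{j,i}=-\omega_{i,j}$; hence $S_{1}=0$ and therefore
\[
0=S_{2}=\sum_{i=1}^{r-1}a_{i,r}\,(v_{i}\wedge v_{r})=\Bigl(\textstyle\sum_{i=1}^{r-1}a_{i,r}v_{i}\Bigr)\wedge v_{r}.
\]

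Finally, using $a_{r,j}=a_{j,r}$ and $v_{r}\wedge v_{r}=0$ one gets $w_{r}\wedge v_{r}=\bigl(\sum_{j=1}^{r-1}a_{r,j}v_{j}\bigr)\wedge v_{r}=\bigl(\sum_{j=1}^{r-1}a_{j,r}v_{j}\bigr)\wedge v_{r}=0$, which is exactly $v_{r}\,||\,w_{r}$. I do not expect any serious obstacle here: once the wedge-product reformulation is in place the argument is a two-line computation, and the only thing requiring attention is the sign bookkeeping in the cancellation of $S_{1}$, together with the remark that the diagonal coefficients $a_{i,i}$ never enter. (If one wishes to avoid exterior algebra, the same computation runs with the ad hoc convention that $u\,||\,v$ means $u\in\KK v$ when $v\neq0$, at the cost of treating separately the indices with $v_{i}=0$.)
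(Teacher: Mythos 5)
Your proof is correct, and it takes a genuinely different (and slicker) route than the paper's. The paper first reduces to the case where all $v_i$ are nonzero (so that each hypothesis can be rewritten as $\lambda_i v_i+\sum_{j}a_{i,j}v_j=0$ for suitable $\lambda_i\in\KK$), then assembles these relations into a symmetric $r\times r$ matrix $A$ together with the $r\times k$ coordinate matrix $V$, and concludes from the fact that $V^tAV$ is symmetric and, by construction, equal to the rank-one matrix $\bar b^t v_r$ (with $\bar b$ the coordinate row of $\sum_{j<r}a_{r,j}v_j$) that $v_r$ and $\sum_{j<r}a_{r,j}v_j$ are proportional. Your wedge-product reformulation exploits exactly the same symmetry $a_{i,j}=a_{j,i}$, but more economically: since $u\,||\,w$ is precisely $u\wedge w=0$, the degenerate cases with vanishing vectors are absorbed automatically (no $\lambda_i$'s, no preliminary reduction), and the cancellation of the mixed terms in $\sum_{i=1}^{r-1}v_i\wedge w_i$ replaces the matrix computation. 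In essence both arguments amount to observing that $\sum_{i,j}a_{i,j}\,v_i\wedge v_j$ vanishes identically by symmetry, so the vanishing of the first $r-1$ contributions forces that of the last one; your write-up makes this a two-line computation and is, if anything, cleaner than the paper's, at the modest cost of invoking $\Lambda^2\KK^k$.
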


\begin{proof}
Note that if $v_r=0$ or $v_1=v_2=\dots=v_{r-1}=0$, then the result is immediate. 
Also, if any of the vectors $\{v_i\}_{i=1,\dots,r-1}$ are trivial, say $v_1=0$, it is
enough to solve the same problem on the remaining vectors, since the coefficients
$a_{1,i}=a_{i,1}$ either multiply the vector $v_1$ (and hence they have no contribution)
or multiply $v_i$ in the condition $v_1 || \sum_{j=1}^r a_{1,j} v_j$, which is 
trivially satisfied since $v_1=0$.

Therefore, we will assume that all the vectors $v_i$ ($i=1,\dots,r$) are non-zero.
Note that if $v\neq 0$, then $v || w$, implies the existence of $\lambda \in \KK$
such that $\lambda v+w=0$. Hence, in our case, there exist $\lambda_i\in \KK$ such that 
\begin{equation}
\label{eq-cmatrix}
\lambda_i v_i+\sum_{j=1}^r a_{i,j} v_j=0
\end{equation}
for $i=1,\dots,r-1$. Consider an $r\times r$ symmetric matrix $A:=(\alpha_{i,j})$ 
where the first $r-1$ rows are given by the coefficients of the 
equations~(\ref{eq-cmatrix}) over the variables $v_i$, and the last row is given 
by the coefficients of $\sum_{j=1}^{r-1} a_{r,j} v_j$ over the same variables. 
Also consider $V:=(v_{i,j})$ an $r \times k$ matrix whose $i$-th row is given 
by the coefficients of $v_i$. By~(\ref{eq-cmatrix}) one has that
$$A V=
\left(
\begin{matrix}
0 & 0 & \dots & 0 & 0 \\
0 & 0 & \dots & 0 & 0 \\
&&\dots &&\\
0 & 0 & \dots & 0 & 0 \\
b_1 & b_2 & \dots & b_{k-1} & b_{k}
\end{matrix}
\right),
$$ 
and hence 
$$V^t A V=\bar b^t v_r,$$
where $\bar b=(b_1,b_2,\dots,b_{k-1},b_{k})$ (row notation) is the vector of 
coordinates of $\sum_{j=1}^{r-1} a_{r,j} v_j$. Since $V^t A V=\bar b^t v_r$
is symmetric, one obtains that $v_r||\sum_{j=1}^{r-1} a_{r,j} v_j$ and 
therefore $v_r||\sum_{j=1}^{r} a_{r,j} v_j$.
\end{proof}

In other words, the admissibility conditions for each point are redundant.

In what follows we look into how the admissibility conditions change under a $\sigma$-process. 
Consider $v_W:=(v_i)_{i\in \mathbf{r}}$ a $k$-admissible family of vectors for $W$, 
and $\widehat W$ a $\sigma$-process of $W$ at $P$ associated with the multiplicity 
list $\bar \nu:=(\nu_\delta)_{\delta\in \Delta_P}$.

One has the following.

\begin{proposition}
\label{prop-adm-fam-blowup}
Let $W$ be an ACC and $\widehat W$ a $\sigma$-process of $W$. Then 
any $k$-admissible family $v_W$ on $W$ induces a $k$-admissible family 
$v_{\widehat W}$ on $\widehat W$ and vice versa.
\end{proposition}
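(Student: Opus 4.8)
The plan is to set up an explicit correspondence between $k$-admissible families on $W$ and on $\widehat W$ by prescribing what the extra vector $v_E$ attached to the exceptional divisor $E$ must be, and then verifying that the admissibility conditions $v_{\phi(\delta)}\,||\,v_\delta$ match up on both sides. First I would recall that the only branches whose conditions change under the $\sigma$-process are those sitting over $P$ (or over the infinitely near points $\widehat P_1,\dots,\widehat P_\ell$), since $\widehat\mu$ extends $\mu$ outside $\mathrm{SP}^2(\Delta_P)$ and $\widehat\phi|_\Delta=\phi$; away from $P$ the families $v_W$ and $v_{\widehat W}$ literally agree and there is nothing to check. So the whole argument localizes at $P$.

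The key step is the definition of $v_E$. Given a $k$-admissible family $v_W=(v_i)_{i\in\mathbf r}$ on $W$, for a branch $\delta\in\Delta_P$ one computes from (\ref{eq-vdelta}) and property~(5) of the $\sigma$-process that the new vector $\widehat v_\delta$ (the one built using $\widehat\mu$ and the enlarged index set $\widehat{\mathbf r}=\mathbf r\cup\{E\}$) equals $v_\delta-\nu_\delta\big(\sum_{\delta'\in\Delta_P,\ \phi(\delta')\neq\phi(\delta)}\nu_{\delta'}v_{\delta'}\big)+\nu_\delta v_E$ — here the correction from property~(5) peels off exactly the $\nu_{\delta_1}\nu_{\delta_2}$ terms, and the new branch $\widehat\delta_i$ of $E$ contributes $\nu_\delta v_E$ via $\widehat\mu(\delta,\widehat\delta_i)=\nu_i$. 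I would therefore \emph{define} $v_E:=\sum_{\delta'\in\Delta_P}\nu_{\delta'}v_{\delta'}$ (a ``total transform'' combination), so that $\widehat v_\delta=v_\delta-\nu_\delta v_{\phi(\delta)}\cdot(\text{something})$... more precisely one checks $\widehat v_\delta$ is a combination of $v_\delta$ and $v_E$, and then since $v_\delta\,||\,v_{\phi(\delta)}=\widehat v_{\phi(\delta)}$ by hypothesis, one needs $\widehat v_{\phi(\delta)}=v_{\phi(\delta)}\,||\,\widehat v_\delta$, which follows once $v_E$ is also parallel to $v_{\phi(\delta)}$ — but that is false in general, so the bookkeeping has to be done carefully using Lemma~\ref{lemma-adm-comd}. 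The correct statement is that the $\ell$ new admissibility conditions coming from the branches $\widehat\delta_i$ of $E$ at the points $\widehat P_i$, namely $v_E\,||\,\widehat v_{\widehat\delta_i}=\sum_{\delta\in\widehat P_i}\nu_\delta v_\delta$, together with the original conditions $v_{\phi(\delta)}\,||\,v_\delta$ at $P$, form a system of the shape handled by Lemma~\ref{lemma-adm-comd} (symmetry $a_{i,j}=a_{j,i}$ coming from $\mu(\delta_1,\delta_2)=\mu(\delta_2,\delta_1)$); so after defining $v_E$ by the appropriate linear combination one uses the Lemma to get the remaining parallelisms for free, and conversely, deleting $v_E$ and running the Lemma backwards recovers a $k$-admissible family on $W$.

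After that, two routine verifications remain: that $v_{\widehat W}$ still spans $\KK^k$ (immediate, since $v_{\widehat W}\supset v_W$ as a set, only the new vector $v_E$ is added — deleting it in the reverse direction needs that $v_E$ was redundant for spanning, which holds because $v_E$ is a linear combination of the $v_i$'s); and that the construction is genuinely inverse to itself, i.e.\ starting from $v_{\widehat W}$, restricting to $\mathbf r$, and re-running the $\sigma$-process formula returns $v_{\widehat W}$ — this is where one must check the prescribed value of $v_E$ is forced, which again uses the admissibility conditions at the $\widehat P_i$. I expect the main obstacle to be precisely this matching of the parallelism conditions through the $\sigma$-process formula~(5): the $-\nu_{\delta_1}\nu_{\delta_2}$ correction mixes the branch vectors in a way that only becomes transparent after one isolates the ``exceptional combination'' $v_E=\sum_\delta\nu_\delta v_\delta$ and invokes Lemma~\ref{lemma-adm-comd} to absorb the leftover conditions; getting the direction of the implication right (which conditions are hypotheses, which are conclusions, at $P$ versus at the $\widehat P_i$) is the delicate bookkeeping here.
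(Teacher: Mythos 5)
Your skeleton matches the paper's: extend $v_W$ by a vector $v_E$ attached to the exceptional divisor, check admissibility only at the infinitely near points over $P$, handle the single exceptional branch at each such point with Lemma~\ref{lemma-adm-comd}, and read the converse off the same computation. But there is a genuine gap at the central step. The paper sets $\hat v_E:=\sum_{\delta'\in\Delta_P}\nu_{\delta'}\,v_{\phi(\delta')}$, a weighted sum of the \emph{component} vectors as in~(\ref{eq-ve}), and the whole point of this choice is the cancellation recorded in~(\ref{eq-vdeltahat}): for $\delta\in\Delta_P$ with $i=\phi(\delta)$ one gets $\hat v_\delta=v_\delta+\nu_\delta\bigl(\sum_{\phi(\delta')=i}\nu_{\delta'}\bigr)v_i$, so $v_\delta\,||\,v_i$ immediately gives $\hat v_\delta\,||\,v_i$, with no role played by $v_E$ at all. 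You instead set $v_E:=\sum_{\delta'\in\Delta_P}\nu_{\delta'}v_{\delta'}$, a combination of the \emph{branch} vectors of~(\ref{eq-vdelta}); the same substitution of $v_{\delta'}$ for $v_{\phi(\delta')}$ appears in your formula for $\hat v_\delta$ and in your expression for $\hat v_{\hat\delta_i}$. With that choice the correction term coming from $\hat\mu(\delta_1,\delta_2)=\mu(\delta_1,\delta_2)-\nu_{\delta_1}\nu_{\delta_2}$, which lives on the vectors $v_{\phi(\delta')}$, does not cancel against $\nu_\delta v_E$: writing $v_{\delta'}=\lambda_{\delta'}v_{\phi(\delta')}$, one is left inside $\hat v_\delta$ with a term $\sum_{\phi(\delta')\neq i}\nu_{\delta'}(\lambda_{\delta'}-1)v_{\phi(\delta')}$, which is not parallel to $v_i$ in general. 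This is exactly the difficulty you yourself flag (``which follows once $v_E$ is parallel to $v_{\phi(\delta)}$ --- but that is false in general'').

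Your proposed repair --- feeding the old conditions at $P$ together with the $\ell$ new conditions $v_E\,||\,\hat v_{\hat\delta_i}$ into Lemma~\ref{lemma-adm-comd} --- cannot close this gap. The lemma works one point at a time and yields exactly one parallelism for free, provided all the other parallelisms at that point are already established; at an infinitely near point $\hat P_m$ carrying several branches of the original components, the conditions $v_{\phi(\delta)}\,||\,\hat v_\delta$ for those branches must be verified directly (this is precisely what the cancellation identity does), and only then does the lemma hand you the single remaining condition for the unique exceptional branch $\hat\delta_m$. Moreover the exceptional-branch conditions are conclusions of the construction, not hypotheses you may add to the symmetric system. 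If you replace your $v_E$ by $\sum_{\delta'\in\Delta_P}\nu_{\delta'}v_{\phi(\delta')}$ and redo the computation, your argument collapses to the paper's proof, including the converse, which is then immediate because the identity shows $v_\delta\,||\,v_i$ if and only if $\hat v_\delta\,||\,v_i$; as written, the verification of admissibility for the branches of the old components after the $\sigma$-process is missing.
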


\begin{proof}
Let $v_W$ be an admissible family on $W$. Then $v_{\widehat W}=(\hat v_i)_{i\in \mathbf{\hat r}}$ 
is defined as follows: $\hat v_i=v_i$ if $i\in \mathbf{r}$. The new vector associated
with the exceptional divisor $\hat v_E$ is defined as:
\begin{equation}
\label{eq-ve}
\hat v_E:=\sum_{\delta'\in \Delta_P} \nu_\delta v_{\phi(\delta')}.
\end{equation}
It remains to verify that the new admissible conditions are satisfied. In order to avoid
ambiguity, all the new vectors in $\hat W$ will be denoted as $\hat v$.
First we fix an infinitely near point $\hat P$, a branch $\delta\in \hat \Delta_{\hat P}$,
and assume $i:=\phi(\delta)$. We have two cases:
\begin{enumerate}
\item If $\delta\in \Delta_P$, then 
$$\hat v_\delta:=
\sum_{{\tiny{\begin{matrix}j\in \mathbf{\hat r} \\ j\neq i\end{matrix}}}}
\hat \mu(\delta,j)\ v_j=
\sum_{{\tiny{\begin{matrix}j\in \mathbf{r} \\ j\neq i\end{matrix}}}} 
\Big(\mu(\delta,j)-\nu_\delta 
\Big(\sum_{{\tiny{\begin{matrix}\delta'\in \Delta_P \\ \phi(\delta')=j\end{matrix}}}} 
\nu_{\delta'}\Big)\Big)\ v_{j} + \nu_{\delta} \hat v_{E},$$
(see~(\ref{eq-vdelta})). Therefore
\begin{equation}
\label{eq-vdeltahat}
\hat v_\delta=v_\delta + 
\nu_\delta \Big( \sum_{{\tiny{\begin{matrix}\delta'\in \Delta_P \\ \phi(\delta')=i\end{matrix}}}}
\nu_{\delta'}\Big)\ v_i.
\end{equation}
By hypothesis $v_\delta || v_i$, thus $\hat v_\delta|| v_i$.
\item If $\delta=\hat \delta$ is the branch of the exceptional divisor $E$ at $\hat P$.
Since there is only one such branch at $\hat P$, then Lemma~\ref{lemma-adm-comd} and the previous
case gives the result.
\end{enumerate}

The converse is immediate since according to~(\ref{eq-vdeltahat}) 
$v_\delta || v_i$ if and only if $\hat v_\delta|| v_i$.
\end{proof}

\subsection{Dicritical divisors and connectedness of an ACC}
\begin{definition}
Let $v_W$ be a $k$-admissible family of vectors. Consider $\widehat W$ 
a resolution of $W$ and $v_{\widehat W}$ the $k$-admissible family obtained from $v_W$ 
as in Proposition~\ref{prop-adm-fam-blowup}. An exceptional divisor $E$ is called a 
\emph{dicritical divisor} in $\widehat W$ if there exist at least two branches 
$\delta_1$ and $\delta_2$ in $\hat \Delta$ such that $D_1:=\phi(\delta_1)\neq \phi(\delta_2)=:D_2$ 
and $\hat \mu(\delta_1,E)=\hat \mu(\delta_2,E)=1$ and $\{v_{D_1},v_{D_2}\}$ 
generate a two-dimensional space. Also, an exceptional divisor $E$ is called \emph{trivial} if $v_E=0$.
\end{definition}

\begin{remark}
Under the conditions above, any dicritical divisor is trivial.
This is a direct consequence of the fact that $\widehat W$ must satisfy the admissibility 
conditions. Since $\widehat W$ is a resolution, one has that $v_{D_1}||v_E$ and $v_{D_2}||v_E$,
and $v_{D_1} \not|| v_{D_2}$ which is only true if $v_E=0$.

However, note that the converse is not true in general. For example, consider three 
smooth conics in a pencil of conics that are pairwise bitangent and use the vectors 
$v_1=(1,0)$, $v_2=(0,1)$, and $v_3=(-1,-1)$ for each conic. After the first blowing up of any 
base point one obtains an exceptional divisor $E$ which will not become a dicritical divisor,
but whose associated vector $v_E$ is zero, since $v_E=v_1+v_2+v_3$.
\end{remark}

\begin{definition}
We say two components of an ACC \emph{intersect}, if there are branches 
of each intersecting with positive multiplicity of intersection. Analogously, we say two
components $A$, $B$ are \emph{connected} if there is a sequence of components 
intersecting pairwise. Formally, $A$, $B$ are connected if there is a sequence of 
components $A_0=A,A_1,...,A_n=B$, a sequence of branches 
$\delta_i,\delta'_i\in \Delta$, ($i=1,...,n$) such that $\phi(\delta_i)=A_{i-1}$, $\phi(\delta'_i)=A_i$, 
satisfying $\mu(\delta_i,\delta'_i)\neq 0$. Therefore the concept of 
\emph{connected components} of an ACC can be defined.
\end{definition}

One has the following interesting result.

\begin{lemma}
\label{lemma-dic-connexion}
After resolution and after removing the trivial divisors, different fibers of a 
combinatorial pencil belong to different connected components.
\end{lemma}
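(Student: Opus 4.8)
The statement says: after resolving a combinatorial pencil and deleting the trivial exceptional divisors, the components belonging to distinct fibers $F_i$, $F_j$ of the pencil lie in distinct connected components of the resulting ACC. Since a combinatorial pencil with $k+1$ fibers yields, by Proposition~\ref{prop-comb-kadm}, a $k$-admissible family $v_W$ on $W$ with the explicit form $v_i = m_i e_j$ for $i\in F_j$ ($j\neq 0$) and $v_i = -m_i(e_1+\dots+e_k)$ for $i\in F_0$, the first thing I would do is fix such a family and carry it along the resolution via Proposition~\ref{prop-adm-fam-blowup}, obtaining a $k$-admissible family $v_{\widehat W}$ on the resolution $\widehat W$. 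The point of invoking admissibility is that it packages the base-point conditions of the pencil into a single linear-algebraic statement that survives blowing up. The strategy is then: the ``direction'' of $v_i$ — that is, the ray $\KK_{>0}\,v_i$, or equivalently the index $j$ of the fiber $F_j$ — is a locally constant invariant along intersections of nontrivial components.

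**Key steps.** First I would observe that for each component $i$ of $W$ (including exceptional divisors produced along the way), the vector $v_i$ lies on one of the $k+1$ rays $\RR_{\ge 0} e_1,\dots,\RR_{\ge 0} e_k$, $\RR_{\ge 0}(-(e_1+\dots+e_k))$, OR is zero: for the original components this is immediate from the formula, and for an exceptional divisor $E$ created by a $\sigma$-process, formula~(\ref{eq-ve}) gives $\hat v_E$ as a nonnegative combination $\sum \nu_{\delta'} v_{\phi(\delta')}$ of the vectors of the branches meeting $E$ — but admissibility at $E$ (since after full resolution $E$ meets exactly two branches and $v_{D_1}||v_E$, $v_{D_2}||v_E$) forces all those branches' components to share one ray, unless $v_E = 0$. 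So each \emph{nontrivial} component has $v_i$ on exactly one of the $k+1$ rays; assign to it that ``colour'' $c(i)\in\{0,1,\dots,k\}$. Second, I would show this colour is an \emph{intersection invariant} after resolution: if two nontrivial components $A$, $B$ meet (i.e. there are branches $\delta$ of $A$ and $\delta'$ of $B$ with $\mu(\delta,\delta')\neq 0$), then at the normal-crossing point $\partial(\delta)=\partial(\delta')$ there are only these two branches, and the admissibility condition~(\ref{eq-adm-cond}) reads $v_A \| v_\delta$ where $v_\delta = \mu(\delta,\delta')\,v_B$ (only one branch contributes), hence $v_A \| v_B$; two nonzero vectors on the list of rays are parallel only if they are on the \emph{same} ray, so $c(A)=c(B)$. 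Third, since connectedness in the ACC is by definition a chain of pairwise intersections through components, and we have deleted exactly the trivial (zero-vector) components, every component in a given connected component of $\widehat W$ (with trivial divisors removed) has the same colour. Fourth, I would check the colour really does separate the original fibers: a component $i\in F_j$ keeps $v_i = \pm m_i e_j$ (or the $F_0$ combination) throughout — blowing up only alters $v_i$ for newly created exceptional divisors, never for pre-existing components — so $c(i)=j$, and distinct fibers get distinct colours. Combining, two components in different fibers cannot lie in the same connected component.

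**The main obstacle.** The delicate point is the case distinction when passing an exceptional divisor: I must be careful that ``dicritical'' divisors — where $E$ meets two branches whose components have \emph{different} colours — are exactly the ones the Remark after the definition of dicritical divisor shows to be trivial, hence removed, so they never create an illegitimate bridge between colours. In other words, the only way a blow-up could merge two colours is through a component $E$ with $v_E = 0$, which is precisely what gets thrown away. I would also need to handle, via Lemma~\ref{lemma-adm-comd}, the intermediate stages of the resolution where a point may still carry more than two branches — there the admissibility condition at a branch is a relation among several $v_j$'s, and I want to make sure no nontrivial component silently changes ray; but since the $v_i$ for old components are literally unchanged by $\sigma$-processes, and the final normal-crossing condition reduces every relevant point to two branches, the argument closes once I run it at the level of the fully resolved $\widehat W$. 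The rest is the routine verification that $\RR_{\ge 0} e_1,\dots, \RR_{\ge 0}(-(e_1+\dots+e_k))$ are pairwise non-parallel rays, which is clear.
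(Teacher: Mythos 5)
Your argument is correct and follows essentially the same route as the paper's proof: push the admissible family of the pencil through the resolution via Proposition~\ref{prop-adm-fam-blowup}, observe that at each normal crossing admissibility forces the two vectors to be parallel or one of them zero, and conclude that after deleting the trivial divisors any chain of intersections preserves the direction of the vector, so components of different fibers (whose vectors are non-parallel) cannot be connected. Your ``colour'' bookkeeping is just a more explicit version of the paper's one-line observation that all vectors in $\widehat W$ are multiples of the original ones or zero.
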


\begin{proof}
Let $(\cF,\bar m)$ be a combinatorial pencil in $\cC$ and let $v_W$ be its associated 
$k$-admissible family. Consider $W\leftarrow \widehat W$ a resolution of singularities of
$\cC$. By Proposition~\ref{prop-adm-fam-blowup} the associated family of vectors 
$v_{\widehat W}$ also satisfies the admissibility conditions shown in~(\ref{eq-adm-cond}). 
Since $\widehat W$ corresponds to the combinatorics of a normal crossing divisor the 
condition at each normal crossing of two divisors, say $E$ and $E'$, means that either
$v_E=k v_{E'}$, $k\in \KK^*$, or $v_E=0$, or $v_{E'}=0$. In other words, the vectors
associated with $\widehat W$ are either multiples of the original vectors of $W$ or zero.
Moreover, after removing the trivial divisors, components from different fibers are disconnected.
\end{proof}

\subsection{Intersection matrix}
Let $W$ be a solvable ACC satisfying the B\'ezout conditions~(Definition~\ref{eq-wc}). 
Consider $W=W_0 \leftarrow W_1 \leftarrow \dots \leftarrow W_n$ a resolution of $W$, 
and $v_W:=(v_i)_{i\in \mathbf{r}}$ a $k$-admissible family of vectors. 
Denote by $P_\ell$ the point blown-up at each step $W_\ell \leftarrow W_{\ell+1}$ 
with multiplicity list 
$\bar \nu^{(\ell)}:=(\nu_\delta^{(\ell)})_{\delta\in \Delta_P}$, 
and define by $\cP$ the collection of such points. As seen in Proposition~\ref{prop-adm-fam-blowup},
associated with $v_W$ there are $k$-admissible families 
$v_{W_\ell}:=(v_i)_{i\in \mathbf{r}^{(\ell)}}$ of $W_\ell$. 
Define $\mathbf{\tilde r}^{(n)}:=\{i\in \mathbf{r}^{(n)} \mid v_i\neq 0\}$.
Consider the following \emph{incidence matrix} associated with the resolution $W_n$ of $W$ and with
the admissible family $v_W$:
$$J:=(a_{ij})_{i\in {\mathbf{\tilde r}}^{(n)}, j\in \cP},$$
where 
$$a_{ij}:=
\begin{cases}
\nu_{i}^{(\ell)} & \text{ if } j=P_\ell, i\in \mathbf{r}^{(\ell)} \\
-1 & \text{ if } j=P_\ell, i=E_{\ell+1}\\
0 & \text{ otherwise,}
\end{cases}
$$
(recall that 
$\nu_{i}:=\sum_{{\tiny{\begin{matrix}\delta\in \Delta_P\\ \phi(\delta)=i\end{matrix}}}} \nu_{\delta}$ 
and note that $\# \cP=n$).
Also define the \emph{degree matrix} $D:=\bar d^t \bar d$, where 
$\bar d:=(d_{i})_{i\in {\mathbf{\tilde r}}^{(n)}}$, and
$$d_{i}:=
\begin{cases}
d_i & \text{ if } i\in \mathbf r^{(0)}\\
0 & \text{ otherwise.}
\end{cases}
$$
Finally, we combine both matrices in order to define $Q:=D-JJ^t$.
Also, for convenience, if $W$ is already normal crossing~(\ref{def-res-prop-2}), then we set $JJ^t=0$.

Note that $Q$ is a square matrix of order $\# \mathbf{\tilde r}^{(n)}$.

\begin{proposition}
\label{prop-q}
If $W$ satisfies the B\'ezout Conditions and 
$W\leftarrow W_n=\widehat W=(\hat{\mathbf r},\hat S,\hat \Delta,\hat \partial,\hat \phi,\hat \mu)$ 
is a resolution of $W$, then the matrix $Q=(q_{ij})_{i,j\in \hat{\mathbf{\tilde r}}}$ satisfies the following:
\begin{enumerate}[$(1)$]
\item \label{prop-q-1}
$q_{ij}=\hat \mu(i,j):=
\sum_{{\tiny{\begin{matrix}\delta\in \hat \phi^{-1}(i)\\ \delta'\in \hat \phi^{-1}(j)\end{matrix}}}} 
\hat \mu(\delta,\delta')$ if $i\neq j$ 
(intuitively, the number of points at which the components $i$ and $j$ intersect in $\widehat W$). 
In particular $q_{ij}\geq 0$, 
\item \label{prop-q-2}
$q_{ii}=d_{i}^2-\sum_{\ell\in L_i} (\nu_i^{(\ell)})^2$, where 
$L_{i}:=\{\ell \mid i\in \phi^{(\ell)}(\Delta^{(\ell)}_{P_\ell})\}$.
\end{enumerate}

Moreover, if the resolution is geometric, then $Q$ is the intersection matrix of the non-trivial divisors of
$\hat{\mathbf{r}}$ in $\widehat W$.
\end{proposition}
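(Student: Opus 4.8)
The plan is to prove Proposition~\ref{prop-q} by induction on the number $n$ of $\sigma$-processes in the resolution, tracking how each piece of $Q=D-JJ^t$ changes under a single blow-up. First I would set up the base case $n=0$: here $W$ is already normal crossing, so by convention $JJ^t=0$ and $Q=D=\bar d^t\bar d$. I would then check that the claimed formulas hold: for $i\neq j$, $q_{ij}=d_id_j$, which by the B\'ezout Condition (Definition~\ref{eq-wc}) equals $d_{i,j}=\sum_{\phi(\delta_1)=i,\phi(\delta_2)=j}\mu(\delta_1,\delta_2)$; since at a normal crossing point $\mu$ takes only the values $0$ or $1$ and $\#\Delta_P=2$, this sum counts exactly the number of points where $i$ and $j$ meet, i.e.\ $\hat\mu(i,j)$. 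For the diagonal, $q_{ii}=d_i^2$ and $L_i=\emptyset$, so (\ref{prop-q-2}) holds trivially.

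The inductive step is the heart of the argument. Suppose the result holds for the ACC $W_1$ obtained from $W=W_0$ by blowing up the point $P=P_0$ with multiplicity list $\bar\nu^{(0)}$, and let $E$ be the new exceptional divisor. I would compare the matrix $Q$ built from the resolution $W_0\leftarrow W_1\leftarrow\cdots\leftarrow W_n$ with the matrix $Q'$ built from $W_1\leftarrow\cdots\leftarrow W_n$. The incidence matrix $J$ for the longer resolution differs from $J'$ by one extra column, indexed by $P_0$, whose entry in row $i$ is $\nu_i^{(0)}$ for $i\in\mathbf r^{(0)}$ and $-1$ in row $E$; the degree vector $\bar d$ acquires a zero entry in the slot for $E$. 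Thus $JJ^t=J'(J')^t+\bar\nu\,\bar\nu^t$ where $\bar\nu$ is the extra column, and $D$ simply gains a zero row and column for $E$. Chasing the off-diagonal entries: for $i,j\in\mathbf r^{(0)}$ with $i\neq j$, one gets $q_{ij}=q'_{ij}-\nu_i^{(0)}\nu_j^{(0)}$, and by part (5) of the $\sigma$-process definition this is exactly the drop in the number of intersection points, matching $\hat\mu(i,j)$ computed in $W_n$ via the induction hypothesis applied to $W_1$. For the entry between $E$ and some $i\in\mathbf r^{(0)}$, $q_{iE}=0-(\nu_i^{(0)})(-1)=\nu_i^{(0)}$, which is precisely $\hat\mu(\text{branch of }E,\,i)$ summed over branches — the exceptional divisor meets component $i$ in $\nu_i^{(0)}$ points after the blow-up (and then these get further separated in later blow-ups without changing the count). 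On the diagonal, for $i\in\mathbf r^{(0)}$ the entry picks up $-(\nu_i^{(0)})^2$, adding the term $\ell=0$ to $L_i$; for $E$ itself, $q_{EE}=0-(-1)^2=-1$ plus whatever later $\sigma$-processes at points on $E$ contribute, giving $q_{EE}=-1-\sum_{\ell\in L_E,\ell>0}(\nu_E^{(\ell)})^2$, consistent with (\ref{prop-q-2}) since $d_E=0$ and the only $\ell$ with $E\in\phi^{(\ell)}(\Delta^{(\ell)}_{P_\ell})$ coming from the creation step contributes... actually here one must be careful: $E$ is not in $\phi^{(0)}(\Delta^{(0)}_{P_0})$, so the $-1$ must be accounted for separately — I would handle this by noting that in the formula the index set $L_i$ is defined for $i$ that are \emph{strict transforms}, and for the exceptional divisor the self-intersection bookkeeping is exactly the standard one, $E^2=-1$ at creation, decreasing by $1$ each time a point of $E$ is blown up, which is encoded in $JJ^t$ through the $-1$ entries.

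I expect the main obstacle to be the careful combinatorial bookkeeping around the exceptional divisors: ensuring that "intersection points counted in $W_n$" is consistently tracked as the resolution proceeds, in particular that when a point lying on two already-existing components (or on an exceptional divisor) is blown up, the entry $q_{ij}=\hat\mu(i,j)$ correctly records the number of \emph{remaining} intersection points in the final $W_n$, not in some intermediate stage. This requires checking that once two components are separated (no longer meeting at a common point) they stay separated, and that the quantity $\hat\mu(i,j)$ is genuinely the value in $W_n$ — which follows because subsequent $\sigma$-processes at other points do not affect $\mu$ on $\mathrm{SP}^2(\Delta_{P'})$ for $P'$ not being blown up (property (6) of the $\sigma$-process). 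The final clause, that for a \emph{geometric} resolution $Q$ is the honest intersection matrix of the non-trivial divisors, then follows by the Remark preceding Definition~\ref{def-res}: a geometric resolution of $W$ comes from an actual resolution of the plane curve, where the $\sigma$-processes are ordinary blow-ups, the self-intersection of a strict transform of a degree-$d_i$ curve drops by $(\text{mult}_{P_\ell})^2$ at each blow-up through it and equals $d_i^2$ on $\PP^2$, exceptional divisors have self-intersection governed by the same $JJ^t$ formula, and distinct reduced components of total degree meet in $d_id_j$ points with multiplicity by B\'ezout — so parts (\ref{prop-q-1}) and (\ref{prop-q-2}) reproduce exactly the classical intersection numbers, while the trivial divisors (those with $v_i=0$) are precisely the ones excluded from $\hat{\mathbf{\tilde r}}^{(n)}$ by construction.
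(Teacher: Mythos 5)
Your overall strategy (compute $Q=D-JJ^t$ and track how one $\sigma$-process changes it) is essentially the paper's argument: the paper simply telescopes the identity $\mu^{(\ell)}(i,j)=\nu_i^{(\ell)}\nu_j^{(\ell)}+\mu^{(\ell+1)}(i,j)$ along the resolution and combines it with $d_id_j=\mu(i,j)$, which follows from the B\'ezout Condition. However, as written your induction has a genuine flaw: the inductive hypothesis ``the result holds for the ACC $W_1$'' is not available, because $W_1$ need not satisfy the B\'ezout Condition at all (after a blow-up some pairs of components may be disjoint, so the quotients $d_{i,j}d_{i,k}/d_{j,k}$ are not even defined), and the proposition's degree matrix for $W_1$ would have to use $W_1$-degrees, which do not exist; indeed you implicitly acknowledge this by keeping the original $\bar d$ with a zero entry for $E$, but then the matrix $Q'$ you compare with is not the object the proposition speaks about for $W_1$, and the equality $q'_{ij}=\hat\mu(i,j)$ you invoke is false (it is off by exactly $\nu_i^{(0)}\nu_j^{(0)}$). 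The fix is to induct not on the proposition but on the telescoping identity $\mu^{(m)}(i,j)=\sum_{\ell\geq m,\,\ell\in L_{ij}}\nu_i^{(\ell)}\nu_j^{(\ell)}+\hat\mu(i,j)$ with the degrees of $W_0$ fixed once and for all, applying B\'ezout only at the very last step $m=0$ --- which is precisely the paper's one-line computation.

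A second, more local problem: your treatment of the entries involving an exceptional divisor $E$ asserts that $q_{iE}=\nu_i^{(0)}$ and that later blow-ups ``do not change the count.'' This is not true: each subsequent $\sigma$-process at a point of $E$ through which $i$ still passes lowers $\hat\mu(i,E)$ by $\nu_i^{(\ell)}\nu_E^{(\ell)}$ and simultaneously contributes the same amount to $(JJ^t)_{iE}$ (both $i$ and $E$ have nonzero entries in that column of $J$); only after these compensating terms does one get $q_{iE}=\hat\mu(i,E)$ (in the paper's conic--line example, $q_{1,E_1}=0$, not $1$). On the other hand, your observation about the diagonal of an exceptional divisor is well taken: the $-1$ creation entry makes $q_{EE}=-1-\sum_{\ell\in L_E}(\nu_E^{(\ell)})^2$, which does not match the literal formula in item (2) (and the paper's own example, where $q_{E_1E_1}=-2$, confirms this), so reading (2) as a statement about the components of $W_0$, with the exceptional self-intersections carried by $JJ^t$, is the correct interpretation. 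Your argument for the final geometric clause (classical behaviour of intersection numbers and self-intersections under blow-ups) is the same in substance as the paper's appeal to the Noether formula for intersection multiplicities after resolution.
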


\begin{proof}
By definition of $Q$ one has that 
$$
q_{ij}=d_id_j-
\sum_{\ell\in L_{ij}} \nu_i^{(\ell)}\nu_j^{(\ell)},
$$
where $L_{ij}:=\{\ell \mid i,j\in \phi^{(\ell)}(\Delta^{(\ell)}_{P_\ell})\}$. This implies~(\ref{prop-q-2}).

By definition, one has $\mu^{(\ell)}(i,j)=\nu_i^{(\ell+1)}\nu_j^{(\ell+1)} + \mu^{(\ell+1)}(i,j)$. 
Also, if $W$ satisfies the B\'ezout Conditions then $d_id_j=\mu(i,j)$. Hence,
$d_id_j=\sum_{\ell\in L_{ij}} \nu_i^{(\ell)}\nu_j^{(\ell)} + \hat \mu(i,j)$ and~(\ref{prop-q-1}) follows. 
Finally, since $\widehat W$ is a resolution, $q_{i,j}$ is exactly the number of points at which the 
components $i$ and $j$ intersect in $\widehat W$.

The second part is a consequence of the Noether formula for the multiplicity of intersection
of branches after resolution.
\end{proof}

\begin{example}
\label{ex-conics}
Consider the combinatorics corresponding to the arrangement of four curves:
a smooth conic, two tangent lines to the conic, and the line joining the 
tangency points. We will order them and the exceptional divisors of the 
expected resolution as in Figure~\ref{fig-conic-line-pencil}.
\begin{figure}[ht]
\includegraphics[scale=.3]{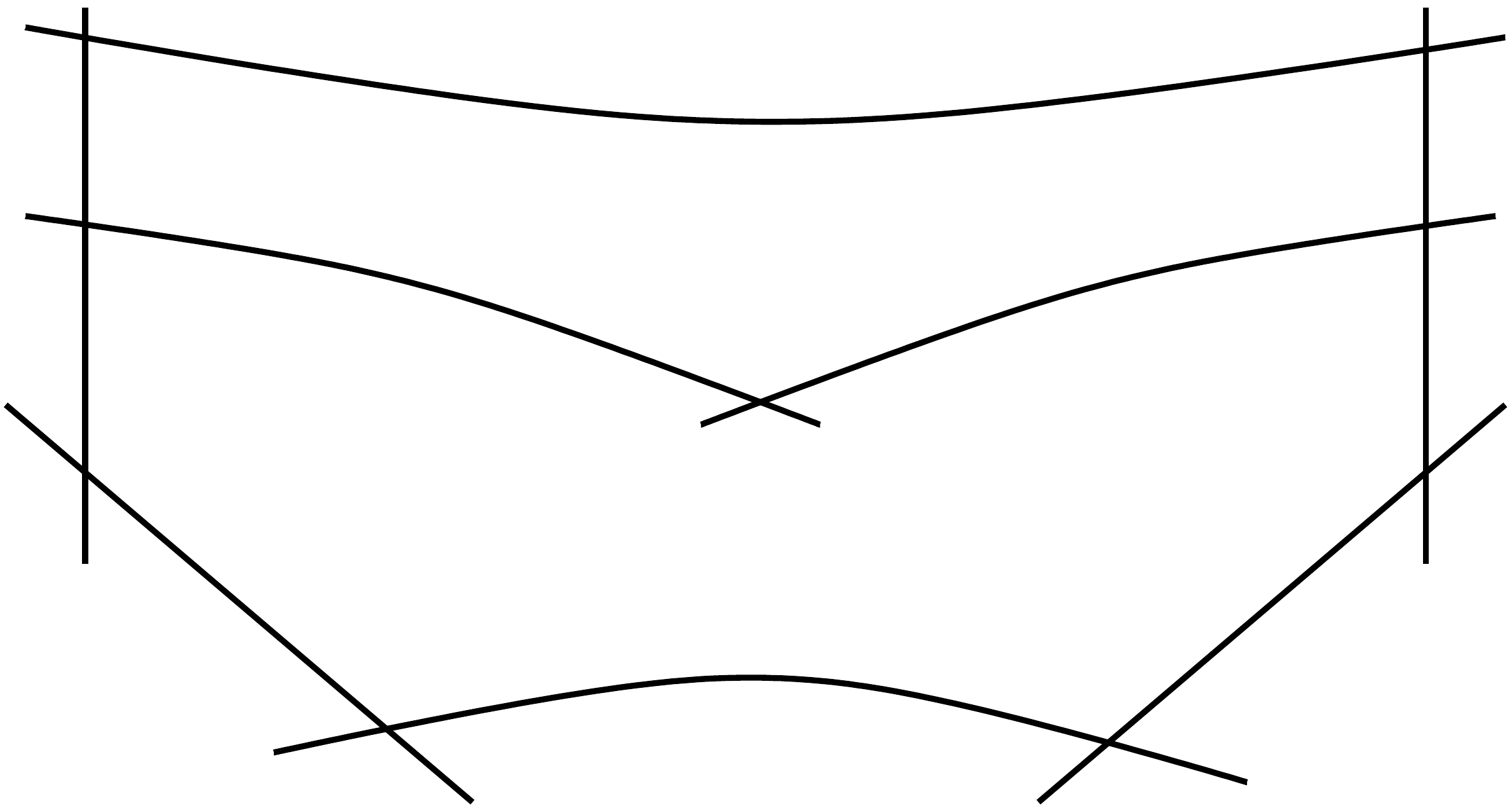}
\begin{picture}(0,0)
\put(-105,100){\makebox(0,0){$\mathbf 1$}}
\put(-170,35){\makebox(0,0){$E_1$}}
\put(-40,35){\makebox(0,0){$E_2$}}
\put(-190,65){\makebox(0,0){$E_3$}}
\put(-8,65){\makebox(0,0){$E_4$}}
\put(-108,23){\makebox(0,0){$\mathbf 4$}}
\put(-80,73){\makebox(0,0){$\mathbf 3$}}
\put(-136,73){\makebox(0,0){$\mathbf 2$}}
\end{picture}
\caption{Resolution of a conic-line arrangement}
\label{fig-conic-line-pencil}
\end{figure}
The only non-trivial case corresponds to when $E_3$ and $E_4$ are the only 
dicritical divisors. The corresponding matrices follow:
$$J:=
\left[
\begin{matrix}
1 & 1 & 1 & 1 \\ 
1 & 0 & 1 & 0 \\ 
0 & 1 & 0 & 1 \\ 
1 & 1 & 0 & 0 \\ 
-1 & 0 & 1 & 0 \\ 
0 & -1 & 0 & 1
\end{matrix}
\right],
D:=
\left[
\begin{matrix}

4 & 2 & 2 & 2 & 0 & 0 \\ 
2 & 1 & 1 & 1 & 0 & 0 \\ 
2 & 1 & 1 & 1 & 0 & 0 \\ 
2 & 1 & 1 & 1 & 0 & 0 \\ 
0 & 0 & 0 & 0 & 0 & 0 \\ 
0 & 0 & 0 & 0 & 0 & 0

\end{matrix}
\right], \text{ and } 
Q:=
\left[
\begin{matrix}

0 & 0 & 0 & 0 & 0 & 0 \\ 
0 & -1 & 1 & 0 & 0 & 0 \\ 
0 & 1 & -1 & 0 & 0 & 0 \\ 
0 & 0 & 0 & -1 & 1 & 1 \\ 
0 & 0 & 0 & 1 & -2 & 0 \\ 
0 & 0 & 0 & 1 & 0 & -2

\end{matrix}
\right].
$$
\end{example}

\begin{definition}
Let $\mathcal P_1:=(\mathcal F_1,\bar m_1)$ and $\mathcal P_1:=(\mathcal F_2,\bar m_2)$ be 
combinatorial pencils of $W$. We say $\mathcal P_2$ is a \emph{refinement} of $\mathcal P_1$ if 
the fibers of $\mathcal P_2$ are contained in the fibers of $\mathcal P_1$.
\end{definition}

\begin{definition}
Let $W$ be a solvable ACC and let $W\leftmap{\pi} \widehat W$ be a resolution, we say 
a combinatorial pencil $\mathcal P=(\cF,\bar m)$ is \emph{primitive w.r.t. $\pi$} if each two components in the 
same fiber of $\cF$ can be connected in $\widehat W$ by non-trivial divisors and the multiplicities of the 
components are coprime, that is, $\gcd(\bar m)=1$.

If $W$ is a weak curve combinatorics, then we simply call a combinatorial pencil \emph{primitive} if it is
pimitive w.r.t. a geometric resolution.
\end{definition}

Note that the combinatorial notion of primitive is again combinatorial, since the resolution is given
again combinatorially.

\begin{theorem}
\label{thm-prim-ref}
Let $W$ be a solvable ACC satisfying B\'ezout Conditions and let $W\leftmap{\pi} \widehat W$ be a resolution,
then any combinatorial pencil in $W$ admits a primitive refinement w.r.t $\pi$.
\end{theorem}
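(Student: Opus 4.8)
The plan is to use the incidence/degree matrix machinery of the previous subsection to detect, and then repair, the failure of primitivity. Starting from a combinatorial pencil $\mathcal P=(\cF,\bar m)$ in $W$, let $v_W$ be the associated $k$-admissible family (Proposition~\ref{prop-comb-kadm}) and let $v_{\widehat W}$ be the induced admissible family on the resolution $\widehat W$ (Proposition~\ref{prop-adm-fam-blowup}). By Lemma~\ref{lemma-dic-connexion}, after discarding the trivial divisors the fibers of $\cF$ fall into distinct connected components of $\widehat W$, but a single fiber $F_j$ may itself break up into several connected components $C_{j,1},\dots,C_{j,s_j}$; moreover the $\gcd$ of the multiplicities within a fiber need not be $1$. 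Primitivity is exactly the statement that each fiber stays connected and the $\gcd$ condition holds, so the obstruction to primitivity is measured by (i) the number of connected components into which the non-trivial divisors of $\bigcup_j F_j$ decompose and (ii) the common factor of $\bar m$.

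First I would handle the $\gcd$ issue: if $\gcd(\bar m)=e>1$, replace $\bar m$ by $\bar m/e$; the defining conditions of Definition~\ref{def-comb-pencil} (the constancy of $\sum_{i\in F_j}m_id_i$ and of the local numbers $k_\delta$) are homogeneous in $\bar m$, so $(\cF,\bar m/e)$ is again a combinatorial pencil with the same fibers, hence trivially a refinement of $\mathcal P$, and now $\gcd=1$. Next, the connectedness issue: I would show that the finer partition $\cF':=\{C_{j,t}\}_{j,t}$ of the components — where each fiber $F_j$ is broken into its connected components in $\widehat W$ after deleting trivial divisors, and components lying only on trivial divisors are distributed so as to keep the degree sums balanced — is itself a combinatorial pencil refining $\mathcal P$. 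The key point is that condition~(\ref{def-cond2}) at a base point $P$, read through the admissible family, says $k_\delta v_{\phi(\delta)}+v_\delta=0$; since after resolution the vectors attached to non-trivial divisors in one connected component are all proportional to a single vector (this is the content of the argument in Lemma~\ref{lemma-dic-connexion}: at each normal crossing $v_E=kv_{E'}$ unless one is zero), the partial sums $\sum_{\phi(\delta')\in C_{j,t}} m_{\phi(\delta')}\mu(\delta,\delta')$ are each individually forced to be proportional to that vector, and matching against the $e_j$-coordinates shows they are separately constant in $(j,t)$ — this promotes the single constant $k_\delta$ per old fiber to one constant per new finer fiber. Similarly the global degree sums $\sum_{i\in C_{j,t}}m_id_i$ are constant because they can be read off from the intersection numbers $q_{i,i'}$ governed by the matrix $Q$ of Proposition~\ref{prop-q}, which only couples components within the same connected piece.

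I would then check that $\cF'$ has at least three members (so that it is a bona fide combinatorial pencil in the sense of Definition~\ref{def-comb-pencil}): since $\cF$ already has $k+1\ge 3$ members and $\cF'$ refines it, $\#\cF'\ge\#\cF\ge 3$. Finally, reapply the $\gcd$-normalization to $\cF'$ if refining has introduced a common factor, and observe that $\cF'$ is primitive w.r.t.\ $\pi$ by construction: each of its fibers $C_{j,t}$ is a single connected component of the non-trivial divisors of $\widehat W$, and the multiplicities are coprime. The main obstacle I anticipate is the bookkeeping for the trivial (in particular dicritical) divisors — components whose $v_i=0$ do not appear in $\widehat{\mathbf r}^{(n)}$ reduced and so are invisible to $Q$, yet they carry nonzero multiplicities $m_i$ and must be assigned to the new fibers without destroying the balance $\sum_{i\in C_{j,t}}m_id_i=d_{\cF'}$; making this assignment canonical (and checking it is always possible, using that a trivial divisor meets the non-trivial locus along at most the fibers it "connects", whose multiplicity-weighted degrees already balance by the admissibility relation $v_E=\sum \nu_\delta v_{\phi(\delta)}=0$) is the delicate step.
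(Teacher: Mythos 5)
There is a genuine gap, and it sits exactly at the heart of the theorem: you keep the original multiplicities $\bar m$ on the refined fibers, allowing yourself only a division by a global $\gcd$, but in general a primitive refinement requires rescaling the multiplicities \emph{separately on each new fiber}, and proving that a consistent positive integral rescaling exists is the real content of the proof. Concretely, take four concurrent lines $\ell_1,\dots,\ell_4$ and the combinatorial pencil with fibers $F_1=\{\ell_1,\ell_2\}$, $F_2=\{\ell_3\}$, $F_3=\{\ell_4\}$ and $\bar m=(1,1,2,2)$ (geometrically: $XY$, $(X-Y)^2$, $(X+Y)^2$). All conditions of Definition~\ref{def-comb-pencil} hold: every fiber has weighted degree $2$ and every local constant $k_\delta$ equals $2$. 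After blowing up the common point the exceptional divisor is trivial and each line is its own connected component, so your refined partition is $\cF'=\{\{\ell_1\},\{\ell_2\},\{\ell_3\},\{\ell_4\}\}$; since $\gcd(\bar m)=1$ your normalization changes nothing, and with the unchanged multiplicities the degree sums over the new fibers are $1,1,2,2$, while for a branch $\delta$ of $\ell_3$ the local sums over the new fibers are $1,1,2$. So neither condition of Definition~\ref{def-comb-pencil} holds for $(\cF',\bar m)$; the correct refinement has all multiplicities equal to $1$, i.e.\ the multiplicities on $F_2$ and $F_3$ must change. Your argument for constancy of the partial sums (``matching against the $e_j$-coordinates'') only shows that each partial sum is a multiple of $e_j$, which is automatic because all vectors attached to one old fiber already are; it gives no control over how the old constant $k_\delta$ distributes among the connected pieces, and likewise the degree-balance claim ``read off from $Q$'' does not follow.

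This missing step is precisely what the paper's proof supplies and what your proposal never establishes: using $Q=D-JJ^t$ and the Vinberg/Kac classification (or Zariski's lemma in the geometric case), one shows that every connected block $Q_\lambda$ is of affine type, hence has a one-dimensional kernel spanned by a strictly positive vector $u_\lambda$, and the new multiplicities are read off from these kernel generators (equivalently from a basis of $K_Q\cap K_D$). Within each new fiber the new multiplicities are indeed proportional to the old ones — this much is consistent with your picture — but the proportionality constants differ from fiber to fiber, so no manipulation by a single global $\gcd$ can produce them; positivity, integrality (after clearing denominators), the equality of the degree sums, and the constancy of the new $k_\delta$'s all come out of the affine-type/one-dimensional-kernel statement. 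Without that structural input (or a substitute for it), the existence of the refined multiplicities, and hence Theorem~\ref{thm-prim-ref} itself, is not proved.
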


\begin{proof}
According to Proposition~\ref{prop-comb-kadm} any combinatorial pencil admits a
$k$-admissible family $v_W:=\{v_i\}_{i\in\mathbf{r}}$. Take a resolution 
$W=W_0\leftarrow W_1 \leftarrow ... \leftarrow W_n=\widehat W$ and 
consider the resulting admissible family $v_{\widehat W}$.

By Lemma~\ref{lemma-dic-connexion}, there exist trivial divisors. After removing them, 
one can construct the matrices $J$, $D$ and $Q$ as above. Let $A$ be the matrix 
whose columns are the non-zero vectors of $v_{\widehat W}$. It is easy to check that B\'ezout's Theorem
implies $D\cdot A^t=0$, and, by construction, $J^t \cdot A^t=0$. So the rows of $A$ are in 
the kernel of both $Q$ and $D$. Note that the kernel of $JJ^t$ coincides with the kernel of $J^t$.

After a suitable ordering of the elements of $\tilde{\mathbf{r}}^{(n)}$, one can assume that 
the matrix $Q$ decomposes into a direct sum of irreducible boxes $Q_\gl$, $\gl=1,\dots,\gk$. 
This induces a partition $\tilde \cF:=\{\tilde F_1,\dots,\tilde F_\gk\}$, (with 
$\tilde F_\gl\subset \tilde{\mathbf{r}}^{(n)}$) 
of the components of the combinatorics, and hence, 
in the columns of $A$. Each submatrix $Q_\gl$ is symmetric, and by 
Proposition~\ref{prop-q}(\ref{prop-q-1}) 
it has non-negative entries outside the diagonal. Hence, using the Vinberg
classification of matrices (see~\cite[Thm. 4.3]{Kac-infinite}) on $-Q_\lambda$, one can ensure that
$Q_\gl$ is of one of the following types:
\begin{itemize}
\item[(Fin)] 
$\det(Q_\gl)\neq 0$; there exists a vector $v$ with positive entries 
such that $Q_\gl v$ has negative entries.
\item[(Aff)] 
$\corank(Q_\gl)=1$, and its kernel is generated by a vector with only positive entries.
\item[(Ind)] 
There exists a vector $u$ with only positive entries such that $Q_\gl u$ has only positive entries.
\end{itemize}
Here (Fin), (Aff), and (Ind) stand for Finite, Affine, and Indefinite types respectively.
We have seen that $Q$ has a nontrivial kernel, so the $Q_\gl$ cannot be an (Fin)-matrix. 
If one of the $Q_\gl$, say $Q_1$, is an (Ind)-matrix, one can consider a vector $u_1$ with 
positive entries such that $Q_1u_1$ has only positive entries. For the rest of the $Q_\gl$, 
one can find vectors $u_\gl$ with only negative entries such that $Q_\gl u_\gl$ has only zero 
entries (if $Q_\gl$ is an (Aff)-matrix) or only negative entries (if $Q_\gl$ is an (Ind)-matrix). 
By multiplying the $u_\gl$ by suitable positive numbers, one can reconstruct a vector $u$ 
such that $Du=0$.

Now denoting by $(\cdot,\cdot)$ the standard scalar product:

$$
0\geq-(J^tu,J^tu)=(Qu,u)-(Du,u)=(Qu,u)=(Q_1u_1,u_1)+\sum_{i\geq 2}(Q_\gl u_\gl,u_\gl)
\geq (Q_1u_1,u_1)>0,
$$
which leads to contradiction. So we can conclude that all the $Q_\gl$ are (Aff)-matrices.

Note that, by Proposition~\ref{prop-q}, the partition $\tilde \cF$ induced by the boxes $Q_\gl$ 
is equal to the partition given by the connected components in $\mathbf{\tilde r}^{(n)}$.

Note that a vector is in the kernel of $Q$ if and only if it is made up of vectors that are 
in the kernel of the $Q_\gl$'s. In particular, the kernel of $Q$ has dimension equal to the 
number of irreducible boxes.

From now on, $K_Q$ will denote the kernel of $Q$, and $K_D$ will denote the kernel of $D$, the degree matrix. 
Let $u_\gl$ be a positive vector that generates the kernel of the box $Q_\gl$, and $\tilde u_\gl$ 
the vector of $\KK^{r_n}$, $r_n:=\# \mathbf{\tilde r}^{(n)}$ obtained from $u_\gl$ by completing the 
entries corresponding to the other boxes with zeroes. As mentioned above, 
$\{\tilde u_\gl\}_{\gl=1,\dots,\gk}$ is a basis of $K_Q$. Since $D=\bar d^t \cdot \bar d$, thus 
$K_D=\ker \bar d$, and thus $\codim K_D=1$. Also, since $K_Q$ has a set of non-negative vectors as a 
basis, one has that $K_Q\not \subset K_D$, and thus $\dim K_Q\cap K_D=\dim K_Q-1$. 
By B\'ezout's Theorem $d:=\bar d \cdot \tilde u_{\gl}$ is independent of $\gl$ and hence 
$\{\tilde u_\gl-\tilde u_\gk\}_{\gl=1,\dots,\gk-1}$ is a basis of $K_Q\cap K_D$. 

Consider $N$, the matrix whose rows are the family of vectors
$\{\tilde u_\gl-\tilde u_\gk\}_{\gl=1,\dots,\gk-1}$. 
As we have seen before, the rows of $A$ must be a linear combination of the rows of $N$.

Let us construct the family of vectors $w:=\{w_i\}_{i\in\mathbf{r}^{(n)}}$ as follows:
\begin{itemize}
\item $w_i=0$ if $i\in\mathbf{r}^{(n)}$ is a trivial divisor.
\item Otherwise, $w_i$ is equal to the corresponding column of $N$.
\end{itemize}

\begin{lemma}
The set of vectors $w$ is a $(\kappa-2)$-admissible family for $\widehat W$.
\end{lemma}
\begin{proof}
Since $\widehat W$ is a normal crossing ACC, the admissibility conditions are verified if 
whenever two components, say $i$ and $j$ intersect, then $w_i||w_j$.

If one of them, say $i$, is a trivial divisor, then there is nothing to prove, since $w_i=0$.
If neither of them are trivial divisors, then $i,j\in F_\gl$ for some $\gl$ in the partition 
$\tilde \cF$. Therefore $v_i$ (and also $v_j$) 
must be proportional to $e_\lambda$ (the obvious vector of the canonical basis) when 
$\gl\neq \gk$ and to $(-1,\ldots,-1)$ if $\gl= \gk$. Thus the result follows.
\end{proof}

All this shows that $\rank N=\dim (K_Q\cap K_D)$, and the columns of $N$ are multiples of 
the following $\gk$ vectors: the $\gk-1$ vectors $e_\gl$ of the canonical basis of 
$\ZZ^{\gk-1}$ and the vector $e_\gk:=(-1,\ldots,-1)$. There is essentially only one 
linear relation among them, which is $\sum_{\gl=1}^\gk e_\gl=0$. Moreover, if $i\in F_\gl$, 
then $v_i=m_i \cdot e_\gl$, where $m_i$ is a positive rational number. Note that, without 
loss of generality, after multiplication by a natural number (for instance the least common
multiple of the denominators) one can assume that the $m_i$'s are integer numbers.

Let us show that $\cF:=\tilde \cF \cap \mathbf{r}$ and 
$\bar m:=(m_i)_{i\in \mathbf{r}}$
defines a combinatorial pencil of $\gk$ fibers. In order to do so we need to check both 
properties in Definition~\ref{def-comb-pencil}. Let $P\in S$ not satisfying 
Property~\ref{def-comb-pencil}(\ref{def-cond1}) and let $\delta\in \Delta_P$ a branch of $i$ 
at $P$. Since $v$ is an admissible family, 
$\sum_{j\neq i} \mu(\delta,j)v_j= 
\sum_{\gl=1}^\gk \left( \sum_{j\in F_\gl} \mu(\delta,j) m_j \right) e_\gl$ 
is proportional to $v_i$, which forces the sum
$k_{\delta}:=\sum_{j\in F_\gl} \mu(\delta,j) m_j$ to be independent of $\gl$.
\end{proof}

\begin{remark}
By Proposition~\ref{prop-q}, in the case where the combinatorial pencil is realizable and the resolution
is geometric one can prove Theorem~\ref{thm-prim-ref} without using the Vinberg classification of matrices.
According to Proposition~\ref{prop-q} in this case $Q_\lambda$ is the intersection matrix of the divisors
in $F_\lambda$. Using Zariski's Lemma for fibrations of surfaces 
(see~\cite[Lemma III.8.2]{Barth-Peters-VandeVen-compct-complex})
a divisor $D=\sum m_iC_i$, where the divisors $C_i$ belong to the same fiber $F_\lambda$, satisfies 
$D^2=\bar m Q_\lambda \bar m^t=0$ if and only if $pD=qF_\lambda$ for $p,q\in \ZZ\setminus \{0\}$.
In particular, for line arrangements this provides another 
proof of the analogous result given in~\cite{Marco-admissible,Falk-Yuzvinsky-multinets}.
\end{remark}

\begin{example}
Let us further analyze Example~\ref{ex-conics}. The matrix $Q$ has three boxes $Q_1=(0)$, 
$Q_2=\left[ \begin{matrix} -1 & 1 \\ 1 & -1\end{matrix}\right]$, and 
$Q_3=\left[\begin{matrix} -1 & 1 & 1\\ 1 & -2 & 1 \\ 1 & 1 & -2\end{matrix}\right]$. 
Therefore $v_1=(1)$, $v_2=(1,1)$, and $v_3=(2,1,1)$ generate the kernels of $Q_1$, $Q_2$ and 
$Q_3$ and $v=(1,1,1,2,1,1)\in K_Q \cap K_D$ according to the notation above. 
In fact, if $\cF:=\{\{\mathbf 1\},\{\mathbf 2, \mathbf 3\},\{\mathbf 4\}\}$
and $\bar m:=(1,1,1,2)$, then $(\cF,\bar m)$ is a combinatorial pencil,
which corresponds to the geometric pencil generated by the conic and the
two lines, which contains the third line twice.
\end{example}

\section{Combinatorial Max Noether Fundamental Theorem}

We are ready to prove the following version of the Max Noether Fundamental Theorem.

\begin{theorem}
\label{thm-noether}
Let $V(F)=\{F=f_1^{\ell_1}\cdot ...\cdot f_p^{\ell_p}=0\},V(G)=\{G=g_1^{m_1}\cdot ...\cdot g_q^{m_q}=0\}$ 
and $V(H)=\{H=h_1^{n_1}\cdot ...\cdot h_r^{n_r}=0\}$ be three curves.
If $F,G$ and $H$ are in a primitive combinatorial pencil, then $F,G$ and $H$ are in a pencil.
\end{theorem}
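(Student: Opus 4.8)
The plan is to reduce Theorem~\ref{thm-noether} to the machinery built in \S\ref{sec-stein}, passing from curves to their weak combinatorial type, and then to use the vanishing of an intersection form to force the three curves into a linear pencil. Let $\cC:=V(F)\cup V(G)\cup V(H)$ and let $W=W_\cC$ be its weak combinatorial type, which is solvable and satisfies the B\'ezout Condition. By hypothesis $(\{F,G,H\},\bar m)$ is a primitive combinatorial pencil with respect to a geometric resolution $W\leftmap{\pi}\widehat W$, where $\bar m$ is the list of multiplicities of the irreducible components of $\cC$. By Proposition~\ref{prop-comb-kadm} this gives a $2$-admissible family $v_W=\{v_i\}_{i\in\mathbf r}$ (three fibers, so $k=2$), and by Proposition~\ref{prop-adm-fam-blowup} an induced $2$-admissible family $v_{\widehat W}$ on the resolution. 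Since the resolution is geometric, Proposition~\ref{prop-q} identifies the matrix $Q=D-JJ^t$ (restricted to non-trivial divisors) with the intersection matrix of the non-trivial divisors of $\widehat W$.

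Next I would translate primitivity into a statement about $Q$. By Lemma~\ref{lemma-dic-connexion}, after removing the trivial divisors the three fibers lie in three distinct connected components of $\widehat W$, so $Q$ decomposes as a block sum $Q=Q_1\oplus Q_2\oplus Q_3$ according to the fibers. Primitivity says each fiber's components are connected in $\widehat W$ by non-trivial divisors, so each $Q_\gl$ is itself irreducible, and $\gcd(\bar m)=1$. The vectors of $v_{\widehat W}$ are proportional to $e_1$, $e_2$, $e_3:=(-1,-1)$ on the respective blocks (as in the proof of Theorem~\ref{thm-prim-ref}), and they satisfy $D\cdot A^t=0$ and $J^t\cdot A^t=0$, hence $Q A^t=0$; therefore each $Q_\gl$ has a nontrivial kernel, spanned (running the Vinberg trichotomy on $-Q_\gl$, exactly as in Theorem~\ref{thm-prim-ref}) by a positive vector $u_\gl$. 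Thus each $Q_\gl$ is of affine type and, on the block $F_\gl$, the restriction of $\bar m$ is a positive multiple of this kernel generator.

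The final step is the geometric realization. Here I would invoke Zariski's Lemma for fibrations of surfaces (as in the remark after Theorem~\ref{thm-prim-ref}): on each fiber block, the effective divisor $D_\gl:=\sum_{i\in F_\gl} m_i C_i$ (with $C_i$ the strict transforms in the geometric resolution $\pi$) satisfies $D_\gl^2=\bar m Q_\gl\bar m^t=0$, and since $Q_\gl$ is affine with connected support and $\gcd$ of the multiplicities $1$, Zariski's Lemma forces the $D_\gl$ to be (rational multiples of) the fiber class of a genuine fibration $\widehat V\to\PP^1$, where $\widehat V\to\PP^2$ is the blow-up realizing $\pi$. Concretely, the three divisors $D_F$, $D_G$, $D_H$ — the total transforms of $F$, $G$, $H$ with their scheme structures — are linearly equivalent and the linear system $|D_F|$ defines a morphism to $\PP^1$ through which all three pass as fibers; pushing forward to $\PP^2$ gives that $F$, $G$, $H$ span a one-dimensional linear system of curves of the common degree $d_\cF$, i.e. $H=AF+BG$ with $A,B\in\CC^*$.

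I expect the main obstacle to be the bookkeeping in the last step: matching the combinatorial kernel vectors $u_\gl$ and the multiplicities $\bar m$ with the actual numerical classes of strict and total transforms on the geometric blow-up, and checking that the three resulting fiber classes genuinely agree (same $d_\cF$) so that a single pencil accommodates all three — rather than three a priori unrelated fibrations. This is where primitivity (connectedness by non-trivial divisors plus coprimality) is essential: without it one only gets that the $D_\gl$ are proportional to fiber classes of possibly different, non-primitive fibrations, which is exactly the line-arrangement counterexample $F=XY$, $G=X^2-Y^2$, $H=X^2-4Y^2$ discussed after Definition~\ref{def-comb-pencil}.
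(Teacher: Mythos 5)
Your first two paragraphs correctly reassemble the combinatorial machinery (admissible family, block decomposition of $Q$, affine type of each block), but this only re-derives facts about the combinatorics that are essentially already contained in the hypothesis of primitivity; the actual content of Theorem~\ref{thm-noether} --- producing the algebraic relation $H=\alpha F+\beta G$ --- is concentrated in your last step, and that step has a genuine gap. You invoke Zariski's Lemma to ``force the $D_\gl$ to be fibers of a genuine fibration,'' but Zariski's Lemma presupposes a fibration: it says that a divisor supported on a fiber of a \emph{given} fibration with self-intersection zero is a rational multiple of that fiber. It cannot produce the fibration, nor the linear system. (An effective divisor of square zero on a rational surface is not, by itself, a fiber of anything; and your remark that the total transforms of $F,G,H$ are linearly equivalent is vacuous --- any two plane curves of the same degree have linearly equivalent total transforms --- while $|\pi^*\cO_{\PP^2}(d)|$ maps to a large projective space, not to $\PP^1$.) The only available fibration is the one obtained by resolving the indeterminacy of $[F:G]$, and the crux of the theorem is to show that the components of $H$ lie in a \emph{single member} of that pencil. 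This is exactly what the paper proves first, using the branchwise combinatorial conditions in an essential way: for each local branch $\gamma$ of $H$ at a base point, $\mu_P(\gamma,F)=\mu_P(\gamma,G)=k$, there is a unique member $\alpha_\gamma F+\beta_\gamma G$ with excess intersection along $\gamma$, and B\'ezout forces that member to contain the whole component $h_1$; connectedness of the strict transform of $H$ outside the dicritical divisors (primitivity) then forces all components of $H$ into the same member. None of this appears in your proposal, and without it the equality $D_H^2=0$ gives you nothing about membership in the pencil $\langle F,G\rangle$.

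The second issue is the part you defer as ``bookkeeping'': it is in fact the second half of the paper's proof. Even once one knows that every component $h_i$ divides a single member, one only obtains $\alpha F+\beta G=H'K$ with $H'=h_1^{n_1'}\cdots h_r^{n_r'}$, where the exponents $n_i'$ are a priori unrelated to the $n_i$ and $K$ is an extra factor. Eliminating $K$ and matching the exponents is where the intersection-matrix comparison is really used (the block $Q_H$ has one-dimensional kernel of affine type, forcing the mixed block $M=0$, i.e.\ $K$ is disconnected from $H''$ after resolution), followed by the algebraic Stein factorization and the coprimality condition $\gcd(\bar m)=1$ to conclude $n_F=n_G=pn_{H''}=1$ and hence $H=H'=\alpha F+\beta G$. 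Your closing paragraph correctly identifies this matching as the obstacle, but declaring that ``primitivity is essential'' here is not an argument; as it stands the proposal assumes the existence of the fibration it is supposed to construct and omits the multiplicity analysis, so it does not prove the theorem.
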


\begin{proof} 
Denote by $\cC\subset \PP^2$ the union of $V(F)$, $V(G)$ and $V(H)$ and by $S$ the set of base points of 
the combinatorial pencil. Let us assume that all irreducible components of $H$ can be connected outside
the combinatorial pencil. Consider a local branch $\gamma$ of $H$ at a point $P\in S$. Denote
$k:=\mu_P(\gamma,F)=\mu_P(\gamma,G)$. Note that generically, one has that 
$\mu_P(\gamma,\alpha F+\beta G)=k$, but there exists a point $[\alpha_\gamma:\beta_\gamma]\in \PP^1$ 
such that $\mu_P(\gamma,\alpha_\gamma F+\beta_\gamma G)>k$. Let us denote by $h_1$ the irreducible 
component containing $\gamma$. Note that, 
$$\begin{cases}
\mu_Q(h_1,\alpha_\gamma F+\beta_\gamma G)
>\mu_Q(h_1,F)=\mu_Q(h_1,G) & \text{ if } Q=P\\
\mu_Q(h_1,\alpha_\gamma F+\beta_\gamma G)
\geq \mu_Q(h_1,F)=\mu_Q(h_1,G) & \text{ otherwise.}
\end{cases}
$$
Therefore, by B\'ezout, 
$d \cdot \deg h_1 = \sum_{Q\in S} \mu_Q(h_1,\alpha_\gamma F+\beta_\gamma G) > \sum_{Q\in S} \mu_Q(h_1,F)=d \cdot \deg h_1$, 
which implies that $\alpha_\gamma F+\beta_\gamma G$ is a multiple of $h_1$.
Consider now a resolution of the pencil, obtained by blowing up the base points $\pi: X\to \PP^2$. The morphism 
$f:X\to \PP^1$ is now well defined on the rational surface $X$ and if $P\notin S$ then $f(\pi^{-1}(P)):=[F(P):G(P)]$.
By hypothesis, the preimage of $V(H)$ outside the dicritical divisors is connected and hence $f$ is constant on the 
strict transform of $V(H)$, for any $Q\in \pi^{-1}(V(H))$ one has that $f(Q)=[-\beta:\alpha]\in \PP^1$.
On the other hand we know that for any $h_1$ and $h_2$ components of $H$ one has that $\alpha_1 F+\beta_1 G = h_1 u_1$ and 
$\alpha_2 F+\beta_2 G = h_2 u_2$. 

Consider now $P_1$ (resp. $P_2$) a regular point of $V(h_1)\setminus S$ (resp. $V(h_2)\setminus S$) and $Q_i:=\pi^{-1}(P_i)$. 
By the remarks in the previous paragraph, $f(Q_i):=[F(P_i):G(P_i)]=[-\beta:\alpha]$. Therefore 
$[-\beta_1:\alpha_1]=[-\beta_2:\alpha_2]=[-\beta:\alpha]$, and hence $\alpha F + \beta G=H'K$, where
$H'=h_1^{n'_1}\cdot h_2^{n'_2}\cdot...\cdot h_r^{n'_r}$. We will denote this by $H'K_H\in (F,G)$.

Let us denote by $Q_1$ (resp. $Q_2$) the intersection matrix associated with a resolution of $F\cdot G\cdot H$ 
(resp. $F\cdot G\cdot H'\cdot K$) that dominates both. According to Proposition~\ref{prop-q}, $Q_1$ and $Q_2$ have 
the following form 

$$
Q_1=
\left[
\begin{matrix}
Q_F & 0 & 0 \\
0 & Q_G & 0 \\
0 & 0 & Q_H
\end{matrix}
\right]
\quad \quad
Q_2=
\left[
\begin{matrix}
Q_F & 0 & 0 & 0\\
0 & Q_G & 0 & 0\\
0 & 0 & Q_{H'} & M \\
0 & 0 & M & Q_K
\end{matrix}
\right]
$$
where $Q_H=Q_{H'}$. Note that the box $Q_H$ has a kernel of dimension 1 and hence 
$M=0$, or else, the box 
$\tilde Q:=
\left[
\begin{matrix}
Q_H & M \\
M & Q_K
\end{matrix}
\right]
$ 
would have a vector of type $(v_H,0)\in \ker \tilde Q$, which contradicts $\tilde Q$ 
being of Affine type. Therefore we can assume that $M=0$, $H'=(H'')^q$, $H=(H'')^p$, and
$$
Q_2=
\left[
\begin{matrix}
Q_F & 0 & 0 & 0\\
0 & Q_G & 0 & 0\\
0 & 0 & Q_H & 0 \\
0 & 0 & 0 & Q_K
\end{matrix}
\right]
$$
which implies that the preimage of $H''$ and $K$ by the resolution $\pi$ are disconnected outside the
dicritical divisors. By the algebraic Stein Factorization Theorem one can find a refinement of the pencil 
$(F,G,H'K)$ into a pencil $(\tilde F,\tilde G,\tilde H)$. Since the original pencil is primitive, one has 
that $\tilde F^{n_F}=F$, $\tilde G^{n_G}=G$, and $\tilde H^{n_{H'}}=H'=(H'')^q$, that is, there exists 
$n_{H''}=\frac{n_{H'}}{q}\in \NN$ such that $\tilde H^{n_{H''}}=H''$. 
By the hypothesis on degrees, this implies that $n_F=n_G=p n_{H''}$. 
Since $\gcd (n_F,n_G,p n_{H''})=1$ by hypothesis, then $n_F=n_G=p n_{H''}=1$. Thus, $H=H'$ and therefore 
$\alpha F+\beta G=H$.

\end{proof}

As an immediate corollary one has the following.

\begin{corollary}
Let $F,G,H$ be three homogeneous polynomials of the same degree in three variables such that their zero sets
$V(F), V(G)$, and $V(H)$ are three irreducible curves with no common components. 
If $V(F)\cap V(G)=V(F)\cap V(H)=V(G)\cap V(H)=\{P\}$ and $F$, $G$, and $H$ are locally irreducible at $P$, then
$H=\alpha F+\beta G$.
\end{corollary}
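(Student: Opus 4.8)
The plan is to deduce the Corollary directly from Theorem~\ref{thm-noether} by checking that the hypotheses force $F$, $G$, and $H$ to lie in a primitive combinatorial pencil. Since $V(F), V(G), V(H)$ are irreducible of the same degree, say $d$, and meet pairwise only at $P$, B\'ezout gives $\mu_P(F,G)=\mu_P(F,H)=\mu_P(G,H)=d^2$. Because each of $F$, $G$, $H$ is locally irreducible at $P$, each contributes a single branch $\delta_F,\delta_G,\delta_H$ at $P$, and the pairwise intersection multiplicities of these three branches are all equal to $d^2$. Taking $\bar m=(1,1,1)$ and the partition $\cF=\{\{F\},\{G\},\{H\}\}$, condition~(1) of Definition~\ref{def-comb-pencil} holds trivially since $\sum_{i\in F_j} m_i d_i = d$ for each fiber, and for condition~(2): the only candidate base point is $P$, where for the branch $\delta_F$ one has $\sum_{\phi(\delta')\in F_j} m_{\phi(\delta')}\mu(\delta_F,\delta')$ equals $d^2$ whether $F_j=\{G\}$ or $F_j=\{H\}$, and symmetrically for $\delta_G$ and $\delta_H$; any other singular point of $\cC$ lies on only one of the three curves, hence satisfies~(\ref{def-cond1}). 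So $(\cF,\bar m)$ is a combinatorial pencil.

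Next I would verify that this combinatorial pencil is primitive. First, $\gcd(\bar m)=\gcd(1,1,1)=1$. Second, each fiber is a single irreducible component, so the connectedness-by-non-trivial-divisors condition in the definition of primitivity is vacuous: there are no two distinct components in the same fiber to connect. Hence $(\cF,\bar m)$ is a primitive combinatorial pencil with respect to a geometric resolution of $\cC$, and Theorem~\ref{thm-noether} applies (with $p=q=r=1$ and all exponents $\ell_i=m_j=n_k=1$) to conclude that $F$, $G$, $H$ lie in a pencil. Since all three have the same degree, membership in a pencil means precisely $H=\alpha F+\beta G$ for some $\alpha,\beta\in\CC$, not both zero; and since $V(H)$ is not a component of $V(F)$ or $V(G)$, both $\alpha$ and $\beta$ are nonzero, giving the stated equation.

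The only point requiring a little care — and the closest thing to an obstacle — is confirming that the combinatorial-pencil axioms are checked against \emph{every} point of $S=\Sing(\cC)$, not merely $P$. The curves $V(F), V(G), V(H)$ may individually be singular at other points, and $\cC$ is singular there too; but at such a point $Q\neq P$ at most one of the three curves passes (since the pairwise intersections are all concentrated at $P$), so $\phi(\Delta_Q)$ is contained in a single fiber and condition~(\ref{def-cond1}) is automatic. Thus no base point other than $P$ arises, and the verification at $P$ given above is the whole content. After that observation the Corollary is a direct specialization of the Theorem.
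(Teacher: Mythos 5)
Your proposal is correct and is exactly the route the paper intends: the paper offers no separate argument (it calls the statement an immediate corollary of Theorem~\ref{thm-noether}), and your verification---single branches at $P$ with pairwise branch multiplicities $d^2$ by B\'ezout, $\bar m=(1,1,1)$, singleton fibers making the connectedness condition vacuous and $\gcd(\bar m)=1$---is precisely the specialization that makes it immediate. The only (harmless) implicit assumption is that $F,G,H$ are taken reduced, i.e.\ themselves irreducible, which is the intended reading of the statement.
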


\bibliographystyle{amsplain}

\begin{thebibliography}{1}

\bibitem{Barth-Peters-VandeVen-compct-complex}
W.~Barth, C.~Peters, and A.~Van~de Ven, \emph{Compact complex surfaces},
  Ergebnisse der Mathematik und ihrer Grenzgebiete (3) [Results in Mathematics
  and Related Areas (3)], vol.~4, Springer-Verlag, Berlin, 1984. \MR{MR749574
  (86c:32026)}

\bibitem{Falk-Yuzvinsky-multinets}
M.~Falk and S.~Yuzvinsky, \emph{Multinets, resonance varieties, and pencils of
  plane curves}, Compos. Math. \textbf{143} (2007), no.~4, 1069--1088.
  \MR{MR2339840}

\bibitem{Fulton-algebraic-curves}
W.~Fulton, \emph{Algebraic curves. {A}n introduction to algebraic geometry}, W.
  A. Benjamin, Inc., New York-Amsterdam, 1969, Notes written with the
  collaboration of Richard Weiss, Mathematics Lecture Notes Series.

\bibitem{Fulton-adjoints}
\bysame, \emph{Adjoints and {M}ax {N}oether's {F}undamentalsatz}, Algebra,
  arithmetic and geometry with applications ({W}est {L}afayette, {IN}, 2000),
  Springer, Berlin, 2004, pp.~301--313. \MR{MR2037097 (2004m:14060)}

\bibitem{Hartshorne-algebraic}
Robin Hartshorne, \emph{Algebraic geometry}, Springer-Verlag, New York, 1977,
  Graduate Texts in Mathematics, No. 52. \MR{MR0463157 (57 \#3116)}

\bibitem{Kac-infinite}
V.G. Kac, \emph{Infinite-dimensional {L}ie algebras}, third ed., Cambridge
  University Press, Cambridge, 1990.

\bibitem{Marco-admissible}
M.{\'A}. Marco~Buzun\'ariz, \emph{Resonance varieties, admissible line
  combinatorics, and combinatorial pencils}, Preprint available at
  \texttt{arXiv:math.CO/0505435}, 2005.

\bibitem{Noether-fundamental-theorem}
M.~N{\"o}ther, \emph{Ueber einen {S}atz aus der {T}heorie der algebraischen
  {F}unctionen}, Math. Ann. \textbf{6} (1873), no.~3, 351--359. \MR{MR1509826}

\bibitem{Scott-proof}
C.A. Scott, \emph{A proof of {N}oether's fundamental theorem}, Mathematische
  Annalen \textbf{52} (1899), no.~4, 593--597.

\end{thebibliography}

\end{document}